\newtheorem{thm}{Theorem}[section]
\newtheorem{lem}[thm]{Lemma}
\newtheorem{prop}[thm]{Proposition}
\newtheorem{rema}[thm]{Remark}
\newtheorem{conj}[thm]{Conjecture}
\newcommand{\R}{\mathbb{R}}
\begin{document}

\title[A one-dimensional symmetry result for nonlocal equations]
{A one-dimensional symmetry result for \\ a class of nonlocal semilinear equations \\ in the plane}

\thanks{The research leading
to these results has received funding from the
European Research Council
Grant n.~321186 - ReaDi -
``Reaction-Diffusion Equations, Propagation and Modelling''
and n.~277749 - EPSILON -
``Elliptic Pde's and Symmetry of
Interfaces and Layers for Odd Nonlinearities'',
the PRIN Grant n.~201274FYK7 ``Critical Point Theory
and Perturbative Methods for Nonlinear Differential Equations'',
the ANR ``HAB'' and ``NONLOCAL" projects (ANR-14-CE25-0013),
the Spanish Grant MTM2011-27739-C04-01 and the
Catalan Grant 2009SGR345. Part of this
work was carried out during a visit by F. Hamel to the
Weierstra{\ss} Institute, whose
hospitality is thankfully acknowledged.}

\author[F. Hamel]{Fran{\c{c}}ois Hamel}
\address{Aix Marseille Universit\'e,
CNRS, Centrale Marseille,
Institut de Math\'ematiques de Marseille,
UMR 7373, 13453 Marseille, France}
\email{francois.hamel@univ-amu.fr}

\author[X. Ros-Oton]{Xavier Ros-Oton}
\address{The University of Texas at Austin,
Department of Mathematics,
2515 Speedway, Austin TX 78751, USA}
\email{ros.oton@math.utexas.edu}

\author[Y. Sire]{Yannick Sire}
\address{Aix Marseille Universit\'e,
CNRS, Centrale Marseille,
Institut de Math\'ematiques de Marseille,
UMR 7373, 13453 Marseille, France}
\email{yannick.sire@univ-amu.fr}

\author[E. Valdinoci]{Enrico Valdinoci}
\address{Weierstra{\ss} Institute,
Mohrenstra{\ss}e 39, 10117 Berlin, Germany, and
Universit\`a di Milano,
Dipartimento di Matematica Federigo Enriques,
Via Cesare Saldini 50, 20133 Milano, Italia}
\email{enrico@mat.uniroma3.it}

\begin{abstract}
We consider entire solutions to ${\mathcal{L}} u= f(u)$ in $\R^2$, where $\mathcal L$ is a nonlocal operator with translation invariant,
even and compactly supported kernel $K$.
Under different assumptions on the operator $\mathcal L$, we show that monotone solutions are necessarily one-dimensional.
The proof is based on a Liouville type approach.
A variational characterization of the stability notion is also given, extending our results in some cases to stable solutions.
\end{abstract}

\maketitle



\noindent{\bf Mathematics Subject Classification:} 45A05, 47G10, 47B34, 35R11.

\noindent{\bf Keywords:} Integral operators, convolution kernels,
nonlocal equations, stable solutions,
one-dimensional symmetry, De Giorgi Conjecture.
\bigskip


\section{Introduction}\label{intro}

In this paper, we consider solutions of an integral
equation driven by a nonlocal, linear operator of the form
\begin{equation}\label{OP}
{\mathcal{L}} u(x):= \int_{\R^n}\big( u(x)-u(y)\big)\,K(x-y)\,dy.
\end{equation}
We suppose
that~$K$ is a measurable and nonnegative kernel,
such that~$K(\zeta)=K(-\zeta)$ for a.e.~$\zeta\in\R^n$.
We consider both integrable and non-integrable kernels $K$.

We recall that
in the past few years, there has been an intense activity
in this type of operators, both for their mathematical interest
and for their applications in concrete models.
In particular, the fractional operators that we consider
here can be seen as a compactly supported version
of the fractional Laplacian~$(-\Delta)^s$
with~$s\in(0,1)$ (and possibly arising from a more
general kernel, which is not scale invariant and
does not possess equivalent extended problems).
Also, convolution operators are nowadays very
popular, also
in relation with biological models, see, among the
others~\cite{25, 26, 32, 34}.
\medskip

We consider here solutions~$u$ of the semilinear
equation
\begin{equation}\label{EQ}
{\mathcal{L}} u = f(u)\quad\hbox{ in }\R^2.
\end{equation}
Notice that, in the biological framework, the solution~$u$
of this equation
is often thought as the density of a biological species
and the nonlinearity~$f$ is a logistic map,
which prescribes the birth and death rate of the population.
In this setting,
the nonlocal diffusion modeled by~${\mathcal{L}}$
is motivated by the long-range interactions between
the individuals of the species.
\medskip

The goal of this paper is to study the symmetry properties
of solutions of~\eqref{EQ} in the light of a famous conjecture
of De Giorgi arising in elliptic partial differential
equations, see~\cite{EDG}. The original problem
consisted in the following question:

\begin{conj}\label{C:DG}
Let~$u$ be a bounded solution of
$$ -\Delta u=u-u^3$$
in the whole of~$\R^n$, with
\begin{equation*}
{\mbox{$\partial_{x_n} u(x)>0$
for any $x\in\R^n$.}}\end{equation*}
Then, $u$ is necessarily one-dimensional, i.e. there exist~$u_\star:\R\to\R$
and~$\omega\in \R^n$ such that~$u(x)=u_\star(\omega\cdot x)$,
for any~$x\in\R^n$, at least when~$n\le8$.
\end{conj}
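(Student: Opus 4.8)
The plan is to use the monotonicity hypothesis to reduce the conjecture to a Liouville-type statement for the linearized operator; this is completely self-contained when $n=2$ and is the mechanism behind the analysis of the present paper. First I would differentiate the equation. Setting $\sigma:=\partial_{x_n}u$, the assumption gives $\sigma>0$ in $\R^n$, and differentiating $-\Delta u=f(u)$ in the direction $x_i$ shows that each $v_i:=\partial_{x_i}u$ solves the linearized equation $-\Delta v_i=f'(u)\,v_i$. In particular $\sigma$ is a positive solution of this linear equation, so $u$ is a \emph{stable} solution, the quadratic form $\xi\mapsto\int_{\R^n}\bigl(|\nabla\xi|^2-f'(u)\,\xi^2\bigr)dx$ being nonnegative on $C^\infty_c(\R^n)$.

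Next I would introduce the ratios $w_i:=v_i/\sigma=\partial_{x_i}u/\partial_{x_n}u$, which are smooth and well defined since $\sigma>0$. Using the two linearized equations one checks that
\[
\mathrm{div}\bigl(\sigma^2\nabla w_i\bigr)=\sigma\,\Delta v_i-v_i\,\Delta\sigma=0\qquad\text{in }\R^n .
\]
Testing this equation with $w_i\zeta^2$, $\zeta\in C^\infty_c(\R^n)$, and integrating by parts gives the Caccioppoli-type bound
\[
\int_{\R^n}\sigma^2\,|\nabla w_i|^2\,\zeta^2\,dx\ \le\ 4\int_{\R^n}\sigma^2\,w_i^2\,|\nabla\zeta|^2\,dx\ =\ 4\int_{\R^n}v_i^2\,|\nabla\zeta|^2\,dx\ \le\ 4\int_{\R^n}|\nabla u|^2\,|\nabla\zeta|^2\,dx .
\]
It then remains to choose $\zeta$ so that the right-hand side tends to $0$ as $\{\zeta=1\}$ exhausts $\R^n$; once this is done, $\nabla w_i\equiv 0$, so each $w_i$ equals a constant $\omega_i/\omega_n$, $\nabla u$ is everywhere parallel to the fixed vector $\omega=(\omega_1,\dots,\omega_n)$, and hence $u(x)=u_\star(\omega\cdot x)$. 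In the plane the choice is explicit and unconditional: take $\zeta_R$ equal to $1$ on $B_R$, equal to $2-\log|x|/\log R$ on $B_{R^2}\setminus B_R$, and $0$ outside, so that $|\nabla\zeta_R|=1/(|x|\log R)$; since a bounded solution satisfies $\|\nabla u\|_{L^\infty(\R^2)}<\infty$ by interior elliptic estimates, one gets $\int_{\R^2}|\nabla u|^2\,|\nabla\zeta_R|^2\,dx\le C(\log R)^{-2}\int_{R<|x|<R^2}|x|^{-2}\,dx=2\pi C(\log R)^{-1}\to 0$.

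The main obstacle is exactly the decay of the right-hand side of the Caccioppoli inequality, and this is where the dimension enters. For $n=2$ the argument above works as it stands. For $n=3$ the same logarithmic cutoff still closes the estimate, but only after one has separately established the energy bound $\int_{B_R}|\nabla u|^2\le CR^2$ for monotone solutions, which is no longer a triviality. For $4\le n\le 8$ no quadratic energy bound of this type is known, and the proof must instead proceed through a geometric route: a blow-down analysis showing that, after rescaling, the level sets of $u$ converge to an area-minimizing hypersurface, together with the Bernstein-type flatness theorem for entire area-minimizing graphs, which is valid precisely up to dimension $8$. This route requires in addition the limit condition $u(x',x_n)\to\pm1$ as $x_n\to\pm\infty$, and the statement for $4\le n\le 8$ without that extra hypothesis remains open. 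Finally, the restriction $n\le 8$ cannot be removed: for $n\ge 9$ there exist bounded monotone solutions, modeled on the Simons cone, that are not one-dimensional.
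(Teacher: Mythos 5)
The item you were asked to prove is Conjecture~\ref{C:DG}, which the paper states only as motivating background; the paper itself does \emph{not} prove it (the introduction explicitly notes it ``is still open in its generality''), so there is no paper proof to compare against. Your proposal is honest about this and is essentially a correct account of the state of the art. The $n=2$ argument you give is a complete, valid proof: differentiating the equation, forming the ratio $w_i=\partial_{x_i}u/\partial_{x_n}u$, deriving $\mathrm{div}(\sigma^2\nabla w_i)=0$, the Caccioppoli bound with constant $4$, and closing the estimate with the logarithmic cutoff in $\R^2$ together with the interior gradient bound for bounded solutions. This is exactly the Berestycki--Caffarelli--Nirenberg / Ambrosio--Cabr\'e Liouville method, and it is also precisely the technique the paper transposes to the nonlocal setting to prove its actual main results (Theorems~\ref{DG-2} and~\ref{DG-3}): there, the role of $\mathrm{div}(\sigma^2\nabla w_i)=0$ is played by the double-integral identity of Lemma~\ref{L-A-1}, and the logarithmic cutoff is replaced by a plain $R$-scale cutoff, whose cross term is controlled by exploiting the compact support of $K$ (via the set $\mathcal{R}_R$) rather than by the $\log$-decay available in the local $2$D case. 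Your account of the higher-dimensional picture is also accurate: $n=3$ needs the energy estimate $\int_{B_R}|\nabla u|^2\le CR^2$ for monotone solutions (Ambrosio--Cabr\'e); $4\le n\le 8$ is only known under the additional Gibbons-type limit hypothesis (Savin), and the conjecture as stated is open there; Del Pino--Kowalczyk--Wei gives counterexamples for $n\ge 9$. In short, you have correctly proved the $n=2$ case and correctly located the open part, which is the best one can do for a statement that is still a conjecture.
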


The literature has presented several variations of Conjecture~\ref{C:DG}:
in particular, a weak form of it has been investigated
when the additional assumption
\begin{equation}\label{LIM}
\lim_{x_n\to\pm\infty} u(x_1,\dots,x_n)=\pm1
\end{equation}
is added to the hypotheses.
When the limit in~\eqref{LIM} is uniform with respect to the
variables~$(x_1,\dots,x_{n-1})\in\R^{n-1}$,
the version of Conjecture~\ref{C:DG} obtained in this
way is due to Gibbons and is related to problems in
cosmology.

In spite of the intense activity of the problem,
Conjecture~\ref{C:DG} is still open in its generality.
Up to now, Conjecture~\ref{C:DG} is known to have a positive
answer in dimension~$2$ and~$3$ (see~\cite{AC,GG} and
also~\cite{AAC,BCN})
and a negative answer in dimension~$9$ and higher (see~\cite{DKW}).
Also, the weak form of
Conjecture~\ref{C:DG} under the limit assumption in~\eqref{LIM}
was proved, up to the optimal dimension~$8$, in~\cite{S}
(see also~\cite{FVT} for more general conditions at infinity),
and the version of Conjecture~\ref{C:DG}
under a uniform limit assumption in~\eqref{LIM}
holds true in any dimension
(see~\cite{BBG, BHM,F}).
Since it is almost impossible to keep track in this short introduction
of all the research developed on this important topic,
we refer to~\cite{FV} for further details and motivations.
\medskip

The goal of this paper is to investigate
whether results in the spirit of
Conjecture~\ref{C:DG} hold true when the Laplace operator
is replaced by the nonlocal operator in~\eqref{OP}.
We remark that symmetry results in nonlocal settings
have been obtained in~\cite{CC1,CC2,CS2,CS3,CSM,LV,SV},
but all these works dealt with fractional
operators with scaling
properties at the origin and at infinity
(and somehow with
nice regularizing effects).

Also, some of the problems
considered in the previous works rely on an extension
property of the operator that brings the problem
into a local (though higher dimensional and either singular
or degenerate) problem (see however~\cite{bucur,CSV}
where symmetry results for fractional problems
have been obtained without extension techniques).

In this sense,
as far as we know, this paper is the first one to take into
account kernels that are compactly supported,
for which the above
regularization techniques do not always hold and for which
equivalent local problems are not available.
Moreover, the strategy used in our proof is different
from the ones already exploited in the nonlocal setting,
since it relies directly on a technique
introduced by~\cite{BCN} and refined in~\cite{AC},
which reduced the symmetry property of the level
sets of a solution to a Liouville type property
for an associated equation (of course, differently from
the classical case, we will have to deal with equations,
and in fact inequalities, of integral type, in which the
appropriate simplifications are more involved).
\medskip

In this paper, we prove the following
one-dimensional result in dimension~$2$. The case of dimension $3$, following the approach of Ambrosio and Cabr\'e in the local case for instance would require deeper analysis of optimal energy estimates.   
Here, and throughout the paper, $B_r$ denotes the open Euclidean ball with radius $r>0$ and centered at the origin, $B_r(x)=x+B_r$, and $\chi_E$ denotes the characteristic function of a set $E$.

\begin{thm}\label{DG-2}
Let $n=2$ and let $\mathcal L$ be an operator of the form \eqref{OP}, with $K$
satisfying either
\begin{equation}\label{ASS-kernel}
m_0\chi_{B_{r_0}}(\zeta)\le K(\zeta)\le
M_0\chi_{B_{R_0}}(\zeta)
\end{equation}
or
\begin{equation}\label{ASS-kernel-frac}
m_0\chi_{B_{r_0}}(\zeta)\le |\zeta|^{2+2s}\,K(\zeta)\le
M_0\chi_{B_{R_0}}(\zeta),
\end{equation}
for any~$\zeta\in\R^2$, for some fixed~$M_0\geq m_0>0$,~$R_0\geq r_0>0$, and $0<s<1$ in~\eqref{ASS-kernel-frac}.
Let~$u$ be a solution of~\eqref{EQ}, with~$u\in C^1(\R^2)$ and~$f\in C^{1,\alpha}(\R)$.
Assume that
\begin{equation}\label{mono}
{\mbox{$\partial_{x_2} u(x)>0$ for any $x\in\R^2$.}}
\end{equation}
Then, $u$ is necessarily one-dimensional.
\end{thm}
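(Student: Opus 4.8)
The plan is to follow the Berestycki--Caffarelli--Nirenberg / Ambrosio--Cabr\'e strategy, adapted to the nonlocal integral operator $\mathcal L$. The starting point is that, by \eqref{mono}, the derivative $v:=\partial_{x_2}u>0$ satisfies a linearized equation: differentiating \eqref{EQ} in $x_2$ gives $\mathcal L v = f'(u)\,v$ in $\R^2$, at least formally; one must first check that this differentiation is licit, which is where the $C^1$ regularity of $u$, the $C^{1,\alpha}$ regularity of $f$, and the structure of $K$ in \eqref{ASS-kernel} or \eqref{ASS-kernel-frac} enter (in the fractional-type case \eqref{ASS-kernel-frac} one needs enough interior regularity of $u$, presumably established earlier in the paper, to make sense of $\mathcal L v$). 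Then, for \emph{any} other directional derivative $w:=\partial_e u$ with $e\in S^1$, the same computation yields $\mathcal L w = f'(u)\,w$. Thus both $v$ and $w$ solve the same linear nonlocal equation, with $v$ of one sign. The heart of the matter is a Liouville-type theorem: if $v>0$ solves $\mathcal L v = f'(u) v$ and $w$ solves the same equation with suitable growth control, then $w/v$ is constant, hence $\nabla u$ is everywhere parallel to a fixed vector, which is exactly one-dimensional symmetry.

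The Liouville step is carried out via the classical trick of setting $\sigma:=w/v$ and deriving an equation (really an integral inequality after multiplying by a cutoff) for $\sigma$. In the nonlocal setting, the key algebraic identity replacing the pointwise one is: for the weighted bilinear form one has, for any test function $\varphi$,
\begin{equation*}
\frac12\int\!\!\int \big(\sigma(x)-\sigma(y)\big)\big(\sigma(x)\varphi^2(x)-\sigma(y)\varphi^2(y)\big)v(x)v(y)K(x-y)\,dx\,dy
= -\frac12\int\!\!\int \big(\sigma(x)-\sigma(y)\big)^2\varphi(x)\varphi(y)v(x)v(y)K(x-y)\,dx\,dy + (\text{remainder}),
\end{equation*}
and combining this with the equations for $v$ and $w$ the zeroth-order terms $f'(u)$ cancel, leaving a Caccioppoli-type inequality
\begin{equation*}
\int\!\!\int \big(\sigma(x)-\sigma(y)\big)^2\varphi(x)\varphi(y)\,v(x)v(y)\,K(x-y)\,dx\,dy
\ \le\ C\int\!\!\int \big(\varphi(x)-\varphi(y)\big)^2 \max\{w(x)^2,w(y)^2\}\,K(x-y)\,dx\,dy.
\end{equation*}
One then chooses the logarithmic cutoff $\varphi=\varphi_R$ that is $1$ on $B_R$, $0$ outside $B_{R^2}$, with the usual $|\nabla \varphi_R|\lesssim 1/(|x|\log R)$ profile. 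In dimension $n=2$, together with the a priori bound $|\nabla u|\le C$ (coming from $u\in C^1$ plus interior estimates), the right-hand side is $O(1/\log R)\to 0$, forcing $\sigma(x)=\sigma(y)$ whenever $K(x-y)>0$; since $K\ge m_0\chi_{B_{r_0}}$ this connects all of $\R^2$, so $\sigma$ is constant. Here the compact support upper bound on $K$ (and in the fractional case the $|\zeta|^{-2-2s}$ decay) is what makes the cross-kernel integrals converge and the energy estimate finite.

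The main obstacle I anticipate is twofold. First, justifying that $w=\partial_e u$ genuinely solves the linearized equation and that all the double integrals above are absolutely convergent: unlike the local case there is no gain of regularity for free when $K$ is merely bounded and compactly supported (case \eqref{ASS-kernel}), so one must argue directly from $u\in C^1$ and the equation, and in case \eqref{ASS-kernel-frac} invoke whatever interior Schauder-type estimate for $\mathcal L$ has been proved earlier. Second, and more delicate, is controlling the right-hand side of the Caccioppoli inequality: the term $(\varphi(x)-\varphi(y))^2$ is supported where $|x-y|\le R_0$ (resp.\ decays like $|x-y|^{-2-2s}$), which is good, but one still needs $\max\{w(x)^2,w(y)^2\}$ to be globally bounded — i.e.\ a global gradient bound $|\nabla u|\in L^\infty(\R^2)$ — and then the $n=2$ logarithmic cutoff must be shown to yield exactly the borderline decay. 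Getting a clean, dimension-$2$-specific energy estimate (the analogue of the $\int_{B_R}|\nabla u|^2 \lesssim R^2$ bound that makes BCN work in the plane but fails in higher dimensions, which is precisely why the authors restrict to $n=2$) is the crux; everything else is bookkeeping with symmetrization of the double integrals.
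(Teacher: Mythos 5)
Your overall blueprint is the same as the paper's: differentiate the equation to get $\mathcal L u_i=f'(u)u_i$, set a quotient $\sigma=w/v$ with $v=\partial_{x_2}u>0$, derive a Caccioppoli-type inequality from the cancellation of the $f'(u)$ terms, and run a cutoff argument in $\R^2$ to force $\sigma$ constant. That is indeed what the paper does (Lemma~2.1 gives the exact algebraic identity, Lemma~2.2 the Liouville step). However, there is one genuine gap and one notable divergence in method.

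The gap is that the Caccioppoli inequality you write cannot be derived without a \emph{Harnack inequality} for the linearized operator $\mathcal L-f'(u)$, and this is the central technical tool of the paper that your sketch omits entirely. After Cauchy--Schwarz, the ``bad'' term is of the form
\[
\iint \bigl(\varphi(x)-\varphi(y)\bigr)^2\,\sigma^2(y)\,v(x)\,v(y)\,K(x-y)\,dx\,dy
=\iint \bigl(\varphi(x)-\varphi(y)\bigr)^2\,w^2(y)\,\frac{v(x)}{v(y)}\,K(x-y)\,dx\,dy.
\]
To pass from $w^2(y)\,v(x)/v(y)$ to your $\max\{w^2(x),w^2(y)\}$ you must bound the ratio $v(x)/v(y)$ uniformly for $|x-y|\le R_0$ (the support size of $K$); the positivity of $v$ alone gives no such bound. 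The paper gets exactly this from the Harnack inequality for $\mathcal L\varphi+c(x)\varphi=0$, applied to $v=u_2$, which is what hypothesis (H2) supplies; under \eqref{ASS-kernel} it comes from Coville's Harnack inequality for integrable convolution kernels, and under \eqref{ASS-kernel-frac} from Di Castro--Kuusi--Palatucci. This is where the specific structural assumptions on $K$ really enter the Liouville step; a global gradient bound $|\nabla u|\in L^\infty$, which you do flag, is necessary but not sufficient.

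The divergence in method concerns the cutoff. You propose the classical BCN logarithmic cutoff $\varphi_R$ (supported in $B_{R^2}$, with $|\nabla\varphi_R|\lesssim 1/(|x|\log R)$) to get an $O(1/\log R)$ bound. The paper instead uses the plain cutoff $\tau_R$ with $|\nabla\tau_R|\le C/R$, supported in $B_{2R}$, and a short bootstrap: writing $J_1$ for the weighted Dirichlet energy of $\sigma$ against $\tau_R^2$, the Cauchy--Schwarz step yields $J_1^2\le C\, J_1$, hence $J_1\le C$ uniformly in $R$; sending $R\to\infty$ shows the integrand is in $L^1(\R^2\times\R^2)$, and then dominated convergence over the shrinking transition region $\mathcal R_R$ (where $|x-y|\le R_0$ and the cutoff is not locally constant) forces $J_1\to 0$. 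In the nonlocal setting this is cleaner than the log-cutoff route, since one only needs $\int|\zeta|^2K(\zeta)\,d\zeta<\infty$ and the compact support of $K$ to control the $|x-y|^2$ factors, with no borderline integral to chase. Your log-cutoff scheme would also close once the Caccioppoli inequality is in hand, but the need for Harnack is the same either way, and it is the step you would have to supply.
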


The assumptions in~\eqref{ASS-kernel}
and~\eqref{ASS-kernel-frac} correspond, respectively,
to the case of an integrable kernel of convolution
type and to the case of a cutoff fractional kernel.
For the existence and further properties of one-dimensional solutions of~\eqref{EQ}
under quite general conditions, see Theorem~3.1(b)
in~\cite{Bates}, and~\cite{ch,co}. As far as assumption~\eqref{ASS-kernel-frac} is concerned, there is no direct reference on the existence of one-dimensional solutions. However, an adaptation of the techniques in \cite{Palatucci-Savin-Valdinoci} could lead to such a result.

We recall that if condition~\eqref{ASS-kernel-frac}
(or, more generally,~(H1) below) is assumed,
one needs to interpret~\eqref{OP}
in the principal value sense, i.e., as customary,
\begin{eqnarray*}
{\mathcal{L}} u(x)&:=& {\rm P.V.}\;\int_{\R^n}
\big( u(x)-u(y)\big)\,K(x-y)\,dy\\
&:=& \lim_{r\to 0} \int_{\R^n\setminus B_r(x)}
\big( u(x)-u(y)\big)\,K(x-y)\,dy.
\end{eqnarray*}

As a matter of fact, our proof of Theorem \ref{DG-2} does not use any special structure of the kernel $K$, but only relies on the following facts: the kernel $K$ has compact support, and the operator $\mathcal L$ satisfies a Harnack inequality.
More precisely, we need:
\begin{itemize}
\item[(H1)] The operator $\mathcal L$ is of the form \eqref{OP}, with the kernel $K$
satisfying $K \geq 0$, $K(\zeta)=K(-\zeta)$ and $K(\zeta)\ge m_0\chi_{B_{r_0}}(\zeta)$
in $\R^2$ for some $m_0>0$ and $r_0>0$.
Moreover, $K$ has \emph{compact support} in $B_{R_0}$ for some $R_0>0$, that is,
\[K\equiv0\quad\textrm{in}\ \R^2\setminus B_{R_0},\]
and
\[\int_{B_{R_0}}|\zeta|^2 K(\zeta)d\zeta <\infty.\]

\item[(H2)] The operator $\mathcal L$ satisfies the following \emph{Harnack inequality}: if $\varphi$ is continuous and positive in $\R^2$ and is a weak solution to ${\mathcal{L}}\varphi+c(x)\varphi=0$ in $B_R$, with $c(x)\in L^\infty(B_1)$ and $\|c\|_{L^{\infty}(B_R)}\le b$, then
    \[\sup_{B_{R/2}}\varphi \leq C \inf_{B_{R/2}}\varphi\]
    for some constant $C$ depending on $\mathcal{L}$ and $b$, but independent of $\varphi$.
\end{itemize}


Under these assumptions, we have the following.

\begin{thm}\label{DG-3}
Let $n=2$, let $\mathcal L$ be an operator of the form \eqref{OP}, with $K$ and $\mathcal{L}$ satisfying~{\rm{(H1)}} and~{\rm{(H2)}}, and let~$u$ be a solution of~\eqref{EQ}, with~$u\in C^1(\R^2)$ and~$f\in C^1(\R)$.
Assume that
\begin{equation*}
{\mbox{$\partial_{x_2} u(x)>0$ for any $x\in\R^2$.}}
\end{equation*}
If $K$ is not integrable, assume in addition that $u\in C^3(\R^2)$.
Then, $u$ is necessarily one-dimensional.
\end{thm}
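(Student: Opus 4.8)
The plan is to follow the Berestycki--Caffarelli--Nirenberg / Ambrosio--Cabré strategy adapted to the nonlocal setting, reducing one-dimensional symmetry to a Liouville-type statement for an associated linear inequality. First I would set $\sigma_i := \partial_{x_i} u$ for $i=1,2$. Formally differentiating \eqref{EQ} in each variable, each $\sigma_i$ is a weak solution of the linearized equation $\mathcal L\sigma_i = f'(u)\sigma_i$ in $\R^2$; this is where the extra $C^3$ hypothesis in the non-integrable case is used, to justify that differentiating under the principal-value integral is legitimate and that $\sigma_i$ is an admissible test/solution function. By the monotonicity hypothesis \eqref{mono}, $\sigma_2$ is a positive solution of this linear equation, so it plays the role of a positive supersolution against which we can compare $\sigma_1$. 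The goal is to show $\sigma_1$ is a scalar multiple of $\sigma_2$ (in fact that the ratio is constant), which forces $\nabla u$ to point in a fixed direction everywhere, i.e. $u$ is one-dimensional.

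The core is the Liouville lemma: if $\varphi>0$ solves $\mathcal L\varphi = c(x)\varphi$ in $\R^2$ (here $c = f'(u)\in L^\infty$ since $u\in C^1$ is bounded on the relevant range and $f\in C^1$), and $\psi$ is any solution of the same linear equation with suitable growth, then $\psi/\varphi$ is constant. The standard device is to write $\psi = \varphi\,\theta$ and derive, using the symmetry $K(\zeta)=K(-\zeta)$, a nonlocal analogue of the identity $\mathrm{div}(\varphi^2\nabla\theta)=0$; concretely one obtains that $\theta$ is a weak solution of a degenerate nonlocal equation with weight built from $\varphi(x)\varphi(y)K(x-y)$. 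Testing this against $\theta\,\eta_R^2$ for a logarithmic cutoff $\eta_R$ supported in $B_{2R}$ and equal to $1$ on $B_R$, and using the compact support of $K$ together with the energy estimate $\int_{B_R}|\nabla u|^2 \lesssim R^2$ (which follows in dimension $2$ from boundedness of $u$ and of $\mathcal L u = f(u)$, exactly as in Ambrosio--Cabré), yields a bound of the form
\[
\iint_{B_R\times B_R}\bigl(\theta(x)-\theta(y)\bigr)^2\varphi(x)\varphi(y)K(x-y)\,dx\,dy \;\le\; \frac{C}{\log R}\,\int_{B_{2R}}|\nabla u|^2 \;\longrightarrow\; 0
\]
as $R\to\infty$, where the key input from \eqref{ASS-kernel}--\eqref{ASS-kernel-frac} or (H1) is the lower bound $K\ge m_0\chi_{B_{r_0}}$, which makes the left side control $\iint_{|x-y|<r_0}(\theta(x)-\theta(y))^2\varphi(x)\varphi(y)$. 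Hence $\theta(x)=\theta(y)$ for a.e. pair with $|x-y|<r_0$, and connectedness of $\R^2$ forces $\theta\equiv\mathrm{const}$, i.e. $\psi = \mathrm{const}\cdot\varphi$.

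Applying this with $\varphi=\sigma_2$ and $\psi=\sigma_1$ (after checking $\sigma_1$ has the admissible growth — it is bounded since $u\in C^1$, which suffices because $K$ is compactly supported) gives $\partial_{x_1}u \equiv \lambda\,\partial_{x_2}u$ for a constant $\lambda$, whence $u(x)=u_\star(\omega\cdot x)$ with $\omega=(\,-\lambda,1)/\sqrt{1+\lambda^2}$ after a rotation; the profile $u_\star$ is monotone by \eqref{mono}. The role of the Harnack inequality (H2) is to guarantee that $\sigma_2$ does not degenerate: it is needed to ensure $\sigma_2$ is bounded below by a positive constant on each ball (locally) so that the weight $\varphi(x)\varphi(y)$ appearing above is genuinely nondegenerate and the passage from ``$\theta$ constant a.e. on short pairs'' to ``$\theta$ globally constant'' is valid — more precisely, Harnack controls $\sup_{B_{R/2}}\sigma_2 \le C\inf_{B_{R/2}}\sigma_2$, which both bounds $\sigma_1/\sigma_2$ locally (needed to make $\theta$ a legitimate test function) and prevents the cutoff argument from being spoiled by vanishing of $\varphi$.

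I expect the main obstacle to be making the formal identity ``$\mathcal L(\varphi\theta) = \cdots$'' and the corresponding energy/cutoff computation rigorous in the nonlocal, merely-$C^1$ (resp. $C^3$) setting: unlike the local case there is no pointwise product rule, so one must work with the bilinear form $\mathcal E(v,w)=\tfrac12\iint (v(x)-v(y))(w(x)-w(y))K(x-y)$ and carefully expand $\mathcal E(\varphi\theta, \varphi\theta\eta^2)$, controlling the nonlocal cross terms $(\theta(x)-\theta(y))(\varphi(x)\eta(x)^2-\varphi(y)\eta(y)^2)$ via Cauchy--Schwarz and absorbing them — the compact support of $K$ and the second-moment bound $\int_{B_{R_0}}|\zeta|^2K(\zeta)\,d\zeta<\infty$ in (H1) are exactly what is needed to close this, but the bookkeeping (especially the principal-value interpretation when $K$ is singular) is delicate and is the technical heart of the argument.
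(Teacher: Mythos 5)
Your high-level strategy is the right one and matches the paper: reduce one-dimensional symmetry to a Liouville property for the quotient $v=u_1/u_2$, exploit the compact support of $K$, and use the Harnack inequality~(H2) to keep $u_2$ under control; you also correctly identify the role of the $C^3$ hypothesis and the nontrivial ``product rule'' bookkeeping in the nonlocal bilinear form. However, your central quantitative step is wrong as written. The displayed bound
\[
\iint_{B_R\times B_R}\bigl(\theta(x)-\theta(y)\bigr)^2\varphi(x)\varphi(y)K(x-y)\,dx\,dy \;\le\; \frac{C}{\log R}\int_{B_{2R}}|\nabla u|^2
\]
cannot give a vanishing right-hand side: since $u\in C^1(\R^2)$ has bounded gradient, $\int_{B_{2R}}|\nabla u|^2\sim R^2$, so the right side blows up like $R^2/\log R$. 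Moreover, a logarithmic cutoff buys a $1/\log R$ gain only if its support extends to a dyadically larger ball such as $B_{R^2}$; a cutoff supported in $B_{2R}$ behaves like a standard one. Even if one fixes the cutoff, the correct ingredient on the annular error is the pointwise bound $\|\nabla u\|_{L^\infty}<\infty$ coming from $u\in C^1$, not the $L^2$ energy estimate $\int_{B_R}|\nabla u|^2\lesssim R^2$ (the latter is the tool needed in $n=3$, not $n=2$). As it stands, this step does not close.

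The paper's actual route is simpler and sidesteps the logarithmic cutoff and energy estimate entirely. It first establishes the algebraic identity of Lemma~\ref{L-A-1}, namely
\[
\iint (v(x)-v(y))^2\tau^2(x)u_2(x)u_2(y)K\,dx\,dy = -\iint (v(x)-v(y))\bigl(\tau^2(x)-\tau^2(y)\bigr)v(y)u_2(x)u_2(y)K\,dx\,dy
\]
for all $\tau\in C^\infty_0(\R^2)$, then takes $\tau=\tau_R$ to be a \emph{standard} cutoff ($\tau\equiv1$ on $B_R$, supported in $B_{2R}$, $|\nabla\tau|\lesssim R^{-1}$) and applies Cauchy--Schwarz to the right-hand side. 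The error factor is bounded by an absolute constant using two inputs: (i) the Harnack inequality~(H2), applied recursively to translates of $u_2$, which yields the two-sided local comparability $u_2(x)\le C\,u_2(y)$ for $|x-y|\le R_0$ (this is sharper than the ``nondegeneracy on each ball'' you invoke, and is exactly what is needed); and (ii) the boundedness of $u_1=vu_2$, so that $u_2(x)u_2(y)v^2(y)\lesssim u_1^2(y)\lesssim 1$ on the relevant set, together with the second-moment condition $\int_{B_{R_0}}|\zeta|^2K(\zeta)\,d\zeta<\infty$ and $(\tau(x)-\tau(y))^2\lesssim R^{-2}|x-y|^2$. This gives $J_1^2\le CJ_1$ with $C$ independent of $R$, hence $J_1\le C$; letting $R\to\infty$, the function $(v(x)-v(y))^2u_2(x)u_2(y)K(x-y)$ is shown to be in $L^1(\R^2\times\R^2)$, and then, since the annular region $\mathcal R_R$ shrinks to the empty set, dominated convergence drives the right-hand side of the Cauchy--Schwarz bound to zero, forcing $J_1\to 0$ and hence $v\equiv\text{const}$. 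If you want to salvage your version, you should either adopt this two-step ``bounded, then to zero'' scheme with a linear cutoff, or correct the logarithmic cutoff (support out to $B_{R^2}$) and replace the $L^2$ gradient estimate by the $L^\infty$ bound and the Harnack comparison of $u_2$ at nearby points.
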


When \eqref{ASS-kernel} holds, then (H2) follows from the results of Coville (more precisely, Corollary~1.7 in~\cite{Coville}).
Similarly, when \eqref{ASS-kernel-frac} is in force, then~(H2)
follows from a suitable generalization of the results in~\cite{DKP}
(see Remark~\ref{TBD} below).
Thus, thanks to the results in \cite{Coville,DKP}, Theorem \ref{DG-2} follows from Theorem~\ref{DG-3} ---the only difference being the regularity assumed on the solution $u$.

Notice that when the kernel $K$ is non-integrable at the origin, then one expects the operator $\mathcal L$ to be regularizing, and thus bounded solutions $u$ to \eqref{EQ} to be at least $C^1$ (recall that $f$ is $C^1(\R)$). Moreover, when $f$ is smooth, then $u$ is expected to be smooth.
However, in case that $K$ is integrable at the origin as in (\ref{ASS-kernel}), then it is not clear if all bounded solutions are in $C^1(\R^2)$, and this is why we need to take this assumption in Theorem~\ref{DG-2}.

\begin{rema}
Notice that one can produce a $C^1$ solution by the following argument: rewrite equation \eqref{EQ} into the following form:
$$
\int_{\mathbb R^n} u(y)K(x-y)dy = u(x) - f(u(x)).
$$
Hence if $K$ is $C^1$, then the left hand-side of the equation is also $C^1$.
Therefore, assuming that the map $r \to r-f(r)$ is invertible with a $C^1$ inverse, leads to a $C^1$ solution $u$.
\end{rema}

\begin{rema}\label{TBD}
Thanks to the results of \cite{DKP}, the Harnack inequality holds for fractional truncated kernels as in~\eqref{ASS-kernel-frac} ---see $(2.2)$-$(2.3)$ in \cite{DKP}.
Moreover, a straightforward adaptation of their proof allows to take into account the $($bounded$)$ zero order term $c(x)$, and thus condition~{\rm{(H2)}} is satisfied for kernels $K$ satisfying \eqref{ASS-kernel-frac}.
\end{rema}

Harnack inequalities for general nonlocal operators $\mathcal L$ have been widely studied and are known for different classes of kernels $K$; see for instance a rather general form of the Harnack inequality in \cite{DKP}.
Notice that in our case, we need a Harnack inequality with a zero order term in the equation.
It has been proved when the integral operator is the pure fractional Laplacian in \cite{CS1} and refined in \cite{TX}.
It is by now well known that the Harnack inequality may fail depending on the kernel $K$ under consideration, and a characterization of the classes of kernels for which it holds is out of the scope of this paper.
Notice indeed that condition~\eqref{ASS-kernel} is stronger than~(H1),
but under the general assumption~(H1) 
then the Harnack inequality in~(H2) is not known, and thus needs
to be assumed
in Theorem~\ref{DG-3}.

The rest of the paper is devoted to the proof of Theorems~\ref{DG-2} and~\ref{DG-3}.
In particular, Section~\ref{PF}
will present the proof these results,
making use of suitable algebraic identities
and a Liouville type result in a nonlocal setting.
Then, in Section~\ref{EXT} we will
consider the extension of Theorem \ref{DG-3}
to stable (instead of monotone) solutions,
giving also a variational characterization of
stability.\footnote{This paper is the
outcome of two parallel and independent
projects developed at the same time
for these two classes of operators, see~\cite{HV,ROS}.
Since the motivation and the techniques
used are similar, we thought that it was
simpler to merge the two projects into
a single, and comprehensive, paper.}


\section{Proof of Theorems \ref{DG-2} and \ref{DG-3}}\label{PF}

The proofs of Theorems~\ref{DG-2} and~\ref{DG-3} are exactly the same.
We will prove them at the same time.
The first step towards the proof of these results is a suitable algebraic computation, that we
express in this result:

\begin{lem}\label{L-A-1}
Let~$u$ be as in Theorem~$\ref{DG-2}$ or~$\ref{DG-3}$.
Let~$u_i:=\partial_{x_i} u$, for~$i\in\{1,2\}$, and
\begin{equation}\label{DF:v}
v(x):=\frac{u_1(x)}{u_2(x)}.
\end{equation}
Also, let~$\tau\in C^\infty_0(\R^2)$.
Then
\begin{equation}\label{ALG-1}
\begin{split}
&\int_{\R^2}\int_{\R^2} \big(v(x)-v(y)\big)^2\,\tau^2(x) \,u_2(x)\,
u_2(y)\,K(x-y)\,dx\,dy \\ &\qquad= -
\int_{\R^2}\int_{\R^2} \big(v(x)-v(y)\big)\,
\big(\tau^2 (x)-
\tau^2 (y)\big)\,v(y)
\,u_2(x)\,u_2(y)\,K(x-y)\,dx\,dy.
\end{split}
\end{equation}
\end{lem}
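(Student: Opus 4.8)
The plan is to exploit the fact that, since $\mathcal L$ has a translation–invariant kernel, the partial derivatives $u_1$ and $u_2$ both solve the \emph{same} linearized equation, so that the ratio $v=u_1/u_2$ satisfies the pointwise identity $\mathcal L u_1-v\,\mathcal L u_2\equiv0$. Testing this identity against the multiplier $\tau^2\,v\,u_2$ (which is just $\tau^2 u_1$) and rewriting the two resulting quadratic expressions through the symmetric bilinear form associated with $\mathcal L$ will, after an elementary algebraic rearrangement, yield exactly \eqref{ALG-1}.

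First I would differentiate \eqref{EQ} in $x_i$. Since the kernel depends only on $x-y$, differentiation under the integral sign --- justified for $u\in C^1$ when $K$ is integrable, and for $u\in C^3$ when $K$ is not, using the second moment bound in~(H1) --- gives $\mathcal L u_i=f'(u)\,u_i$ in $\R^2$ for $i\in\{1,2\}$. Since $u_2>0$, the ratio $v=u_1/u_2$ is well defined and of class $C^1$ (respectively $C^2$ in the non-integrable case), and subtracting $v$ times the equation for $u_2$ from the one for $u_1$ gives, pointwise in $\R^2$,
\begin{equation*}
\mathcal L u_1-v\,\mathcal L u_2=f'(u)\,(u_1-v\,u_2)=0 .
\end{equation*}

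Next I would fix $\tau\in C^\infty_0(\R^2)$, multiply the last identity by $\tau^2\,v\,u_2$, and integrate over $\R^2$. Setting $\phi:=\tau^2 u_1=\tau^2 v\,u_2$ and $\psi:=\tau^2 v^2 u_2=\tau^2 u_1^2/u_2$ --- both Lipschitz with compact support, because $u_2$ is continuous and bounded below by a positive constant on $\mathrm{supp}\,\tau$ --- this reads $\int_{\R^2}\phi\,\mathcal L u_1=\int_{\R^2}\psi\,\mathcal L u_2$. Applying the integration–by–parts formula $\int_{\R^2}\eta\,\mathcal L w=\tfrac12\int_{\R^2}\int_{\R^2}(w(x)-w(y))(\eta(x)-\eta(y))\,K(x-y)\,dx\,dy$ (valid by Fubini and the evenness of $K$, once absolute convergence is granted) on both sides, and then substituting $u_1=v\,u_2$ and expanding, a direct computation gives
\begin{equation*}
\begin{split}
&\bigl(u_1(x)-u_1(y)\bigr)\bigl(\phi(x)-\phi(y)\bigr)-\bigl(u_2(x)-u_2(y)\bigr)\bigl(\psi(x)-\psi(y)\bigr)\\
&\qquad=u_2(x)\,u_2(y)\,\bigl(v(x)-v(y)\bigr)\,\bigl(\tau^2(x)v(x)-\tau^2(y)v(y)\bigr).
\end{split}
\end{equation*}
Hence $\int_{\R^2}\int_{\R^2}u_2(x)u_2(y)\bigl(v(x)-v(y)\bigr)\bigl(\tau^2(x)v(x)-\tau^2(y)v(y)\bigr)K(x-y)\,dx\,dy=0$, and the elementary identity $(P-Q)(SP-TQ)=S(P-Q)^2+(P-Q)(S-T)Q$, applied with $P=v(x)$, $Q=v(y)$, $S=\tau^2(x)$, $T=\tau^2(y)$, turns this into precisely \eqref{ALG-1}.

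The only point requiring genuine care is the absolute convergence of all the double integrals above, which legitimizes Fubini, the symmetrization of the bilinear form, and the term-by-term expansion of the products. When $K$ is integrable this is immediate, since $\phi$, $\psi$ and the $u_i$ are bounded and $\phi$, $\psi$ are compactly supported. When $K$ is not integrable one first passes to the second–difference (principal value) representation $\mathcal L w(x)=\tfrac12\int_{\R^2}\bigl(2w(x)-w(x+z)-w(x-z)\bigr)\,K(z)\,dz$, absolutely convergent for $w\in C^2$ because of the bound $|2w(x)-w(x+z)-w(x-z)|\le C\,|z|^2$ together with $\int_{B_{R_0}}|z|^2K(z)\,dz<\infty$ from~(H1), and one then checks that $(w(x)-w(y))(\eta(x)-\eta(y))K(x-y)$ is integrable when $w$ is Lipschitz and $\eta$ is Lipschitz with compact support; this is exactly where the $C^3$ hypothesis on $u$ enters. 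The Harnack hypothesis~(H2) is not used in this lemma --- it only enters the subsequent Liouville argument.
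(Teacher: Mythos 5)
Your proposal is correct and follows essentially the same route as the paper: differentiate the equation to obtain $\mathcal L u_i=f'(u)\,u_i$, combine the two identities to eliminate the zero--order term, test against $\tau^2 v\,u_2=\tau^2 u_1$, and pass to the symmetric bilinear form via \eqref{R1}. The only (welcome) difference is the algebraic bookkeeping: you verify in one stroke that the integrand of $I_1-I_2$ equals $u_2(x)u_2(y)\bigl(v(x)-v(y)\bigr)\bigl(\tau^2(x)v(x)-\tau^2(y)v(y)\bigr)$ and then finish with the identity $(P-Q)(SP-TQ)=S(P-Q)^2+(P-Q)(S-T)Q$, whereas the paper reaches the same conclusion through a sequence of telescoping decompositions.
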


\begin{proof}
First, notice that in case \eqref{ASS-kernel-frac}, since $f\in C^{1,\alpha}$ then $u\in C^{1+2s+\alpha}(\R^2)$.
This means that in all cases ---either \eqref{ASS-kernel} or \eqref{ASS-kernel-frac} or (H1)---, the derivatives $u_i$ are regular enough so that ${\mathcal{L}}u_i$ is well defined pointwise, and hence all the following integrals converge.

We observe that, for any~$g$ and $h$ regular enough,
\begin{equation}\label{R1}
\begin{array}{rcl}
\displaystyle2\int_{\R^2} {\mathcal{L}} h(x)\,g(x)\,dx
& = & \displaystyle2\int_{\R^2}\left[\int_{\R^2}
\big( h(x)-h(y)\big)\,K(x-y)\,dy\right]\,g(x)\,dx
\vspace{3pt}\\
& = &
\displaystyle\int_{\R^2}\left[\int_{\R^2}
\big( h(x)-h(y)\big)\,K(x-y)\,dy\right]\,g(x)\,dx
\vspace{3pt}\\
& & \displaystyle+
\int_{\R^2}\left[\int_{\R^2}
\big( h(y)-h(x)\big)\,K(x-y)\,dx\right]\,g(y)\,dy
\vspace{3pt}\\
& = & \displaystyle
\int_{\R^2}\int_{\R^2}
\big( h(x)-h(y)\big)\,
\big( g(x)-g(y)\big)\,
K(x-y)\,dx\,dy.
\end{array}
\end{equation}
By~\eqref{EQ}, we have that
\begin{equation}\label{5bis}\begin{array}{rcl}
f'\big(u(x)\big)\,u_i(x) & \!\!=\!\! &
\partial_{x_i} \left( f\big(u(x)\big)\right)\\
&  \!\!=\!\! & \displaystyle
\partial_{x_i} \big({\mathcal{L}} u(x)\big)
=\partial_{x_i} \left( \int_{\R^2}
\big( u(x)-u(x-\zeta)\big)\,K(\zeta)\,d\zeta \right)
\vspace{3pt}\\
&  \!\!=\!\! & \displaystyle\int_{\R^2}\!\!
\big( u_i(x)-u_i(x-\zeta)\big)\,K(\zeta)\,d\zeta
\vspace{3pt}\\
&  \!\!=\!\! & {\mathcal{L}} u_i(x).
\end{array}\end{equation}
Accordingly,
\begin{eqnarray*}
&& f'(u)\,u_1 u_2 = \big({\mathcal{L}} u_1\big) \,u_2\\
{\mbox{and }} && f'(u)\,u_1 u_2 = \big({\mathcal{L}} u_2\big) \,u_1.
\end{eqnarray*}
By subtracting these two identities
and using~\eqref{DF:v}, we obtain
$$ 0=\big({\mathcal{L}} u_1\big) \,u_2-
\big({\mathcal{L}} u_2\big) \,u_1 = \big({\mathcal{L}} (v u_2)\big) \,u_2
-\big({\mathcal{L}} u_2\big) \,(v u_2).$$
Now, we multiply the previous equality by~$2\tau^2 v$ and we
integrate over $\R^2$.
Recalling~\eqref{R1} together with $vu_2$, we conclude that
\begin{equation*}
\begin{split}
0\, &= 2\int_{\R^2} {\mathcal{L}} (v u_2)(x) \,\tau^2(x) v(x) u_2(x)\,dx
- 2\int_{\R^2}
{\mathcal{L}} u_2(x) \,\tau^2(x) v^2(x) u_2(x)\,dx\\
&=
\int_{\R^2}\!\int_{\R^2}\!\!
\big( v(x)u_2(x)\!-\!v(y)u_2(y)\big)
\big( \tau^2(x) v(x) u_2(x)\!-\!\tau^2(y) v(y) u_2(y)\big)\,
K(x\!-\!y)\,dx\,dy
\\ &\qquad-
\int_{\R^2}\!\int_{\R^2}\!\!
\big( u_2(x)-u_2(y)\big)\,
\big(\tau^2(x) v^2(x) u_2(x)-\tau^2(y) v^2(y) u_2(y)\big)
K(x-y)\,dx\,dy\\
&=: I_1-I_2.
\end{split}
\end{equation*}
By writing
$$ v(x)u_2(x)-v(y)u_2(y) =
\big(u_2(x)-u_2(y)\big)\,v(x) + \big(v(x)-v(y)\big)\, u_2(y),$$
we see that
\begin{equation}\label{XR-2}\begin{split}
I_1\,& =
\int_{\R^2}\!\int_{\R^2}\!\!
\big(u_2(x)\!-\!u_2(y)\big)\,\big( \tau^2(x) v(x) u_2(x)\!-\!\tau^2(y) v(y) u_2(y)\big)\,
v(x)\,K(x\!-\!y)\,dx\,dy
\\&\quad+
\int_{\R^2}\!\int_{\R^2}\!\! \big(v(x)\!-\!v(y)\big)\,
\big( \tau^2(x) v(x) u_2(x)\!-\!\tau^2(y) v(y) u_2(y)\big)\,
u_2(y)\,K(x\!-\!y)\,dx\,dy
.\end{split}\end{equation}
In the same way, if we write
$$\begin{array}{rcl}
\tau^2(x) v^2(x) u_2(x)\!-\!\tau^2(y) v^2(y) u_2(y) & \!\!\!=\!\!\! & \big(\tau^2(x) v(x) u_2(x)\!-\!\tau^2(y) v(y) u_2(y)\big)\,v(x)\vspace{3pt}\\
& & +\big(v(x)-v(y)\big)\,\tau^2(y) v(y) u_2(y),\end{array}$$
we get that
\begin{equation}\label{XR-3}\begin{split}
I_2\,&=\int_{\R^2}\int_{\R^2}\!\!
\big( u_2(x)\!-\!u_2(y)\big)\,
\big(
\tau^2(x) v(x) u_2(x)\!-\!
\tau^2(y) v(y) u_2(y)\big)\,v(x)
\,K(x\!-\!y)\,dx\,dy\\&\qquad+
\int_{\R^2}\int_{\R^2}
\big( u_2(x)\!-\!u_2(y)\big)\,
\big(v(x)\!-\!v(y)\big)\,\tau^2(y) v(y) u_2(y)\,K(x\!-\!y)\,dx\,dy
.\end{split}\end{equation}
By~\eqref{XR-2} and~\eqref{XR-3}, after a simplification we obtain that
\begin{eqnarray*}
I_1-I_2& \!\!\!=\!\!\! &
\int_{\R^2}\int_{\R^2}\!\! \big(v(x)\!-\!v(y)\big)\,
\big( \tau^2(x) v(x) u_2(x)\!-\!\tau^2(y) v(y) u_2(y)\big)\,
u_2(y)\,K(x\!-\!y)\,dx\,dy
\\ &&- \int_{\R^2}\int_{\R^2}\!\!
\big( u_2(x)\!-\!u_2(y)\big)\,
\big(v(x)\!-\!v(y)\big)\,\tau^2(y) v(y) u_2(y)\,K(x\!-\!y)\,dx\,dy.\end{eqnarray*}
Now we notice that
\[\begin{split}
\tau^2(x) v(x) u_2(x)&-\tau^2(y) v(y) u_2(y)= \big(v(x)-v(y)\big)\,\tau^2(x) \,u_2(x)+\\
&+\big(\tau^2 (x)- \tau^2(y) \big)\,u_2(x)\,v(y) +\big( u_2(x)-  u_2(y)\big)\,\tau^2(y)\,v(y),
\end{split}\]
and so
\begin{eqnarray*}
I_1-I_2&=&
\int_{\R^2}\int_{\R^2} \big(v(x)-v(y)\big)^2\,\tau^2(x) \,u_2(x)\,
u_2(y)\,K(x-y)\,dx\,dy \\ &&\hspace{-15mm}+
\int_{\R^2}\int_{\R^2} \big(v(x)-v(y)\big)\,
\big(\tau^2 (x)-
\tau^2 (y)\big)\,v(y)
\,u_2(x)\,u_2(y)\,K(x-y)\,dx\,dy
.\end{eqnarray*}
This proves~\eqref{ALG-1}.\end{proof}

Now we use a Liouville type approach to prove
that solutions~$v$ of
the integral equation in~\eqref{ALG-1} are necessarily
constant (and this is the only step
in which the assumption that the ambient space is~$\R^2$
plays a crucial role):

\begin{lem}\label{Lio}
Let~$u$ be as in Theorem~$\ref{DG-2}$ or~$\ref{DG-3}$, and let $v=u_1/u_2$.
Then~$v$ is constant.
\end{lem}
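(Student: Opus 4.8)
The plan is to deduce from the algebraic identity~\eqref{ALG-1} in Lemma~\ref{L-A-1} a Caccioppoli type inequality for~$v$, and then to exploit the logarithmic cutoff trick (the key two-dimensional ingredient, exactly as in Berestycki–Caffarelli–Nirenberg and Ambrosio–Cabré) to conclude that~$v$ is constant. First I would estimate the right-hand side of~\eqref{ALG-1}. Writing the integrand as a product of $(v(x)-v(y))\tau(x)\sqrt{u_2(x)u_2(y)K(x-y)}$ and $(\tau(x)+\tau(y))v(y)\sqrt{u_2(x)u_2(y)K(x-y)}\cdot\frac{\tau^2(x)-\tau^2(y)}{(\tau(x)+\tau(y))\tau(x)}$ — note $\tau^2(x)-\tau^2(y)=(\tau(x)-\tau(y))(\tau(x)+\tau(y))$ — and applying Cauchy–Schwarz, one absorbs the factor $(v(x)-v(y))^2\tau^2(x)u_2(x)u_2(y)K(x-y)$ into the left-hand side and is left, after using Young's inequality, with a bound
\[
\int_{\R^2}\int_{\R^2}\big(v(x)-v(y)\big)^2\,\tau^2(x)\,u_2(x)u_2(y)K(x-y)\,dx\,dy \;\le\; C\int_{\R^2}\int_{\R^2}\big(\tau(x)-\tau(y)\big)^2\,v^2(y)\,u_2(x)u_2(y)K(x-y)\,dx\,dy.
\]
Here I have used that $v$, $u_2$ are locally bounded (by the $C^1$, resp.\ $C^3$, regularity of $u$ and $\partial_{x_2}u>0$) and that $K$ has compact support in $B_{R_0}$, so that $|x-y|\le R_0$ on the support of the integrand; in particular $v(y)^2$ is controlled on the relevant region by its supremum over a fixed neighbourhood of $\mathrm{supp}\,\tau$ enlarged by $R_0$.

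Next I would choose the standard logarithmic cutoff: for $R$ large, let $\tau=\tau_R$ be supported in $B_{\sqrt R}$, equal to $1$ on $B_R$, and interpolating so that $|\tau_R(x)-\tau_R(y)|\le \frac{C}{\log R}\,\frac{|x-y|}{\mathrm{dist}(\{x,y\},\partial B_{\sqrt R})}$-type bounds hold; the crucial point in $\R^2$ is that $\int\int (\tau_R(x)-\tau_R(y))^2\,dx\,dy$ over the annular region, weighted by the compactly supported $K$ and by the bounded quantities $u_2(x)u_2(y)v^2(y)$, is $O(1/\log R)\to 0$. Because $K(\zeta)\le$ (a constant times $\chi_{B_{R_0}}$ is \emph{not} assumed in (H1), only $K\ge m_0\chi_{B_{r_0}}$ and $\int_{B_{R_0}}|\zeta|^2K<\infty$) — so I must be a little careful: the right-hand side integral is bounded by $\|u_2\|_{L^\infty(B_{\sqrt R+R_0})}^2\,\|v\|_{L^\infty(B_{\sqrt R+R_0})}^2\int_{B_{\sqrt R}}\!\int\,(\tau_R(x)-\tau_R(y))^2 K(x-y)\,dy\,dx$, and using $(\tau_R(x)-\tau_R(y))^2\le C\,|x-y|^2/(R_0^2\log^2 R)$ on the support together with $\int_{B_{R_0}}|\zeta|^2K(\zeta)\,d\zeta<\infty$ gives a bound $C\,|B_{\sqrt R}|/\log^2 R = C\,R/\log^2 R$. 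That does \emph{not} tend to zero, so the naive choice of scales fails; the correct choice, as in the local case, is $\tau_R=1$ on $B_R$, supported in $B_{R^2}$ (or more generally with the ratio of radii going to infinity), for which the corresponding integral is $C\,|B_{R^2}|/\log^2(R) \cdot$ — still problematic. The resolution is that one should \emph{not} bound $v(y)^2$ by its sup; instead one keeps it and uses that the total ``mass'' $\int\int (v(x)-v(y))^2\tau^2 u_2 u_2 K$ grows sublinearly, then runs a Moser–type iteration / the $L^2$ version of the De Giorgi argument. Let me restate the plan more robustly.

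The robust plan is: (i) from~\eqref{ALG-1} and Cauchy–Schwarz obtain, for every $R\ge 1$ and every admissible cutoff $\tau$ with $0\le\tau\le1$,
\[
\int\!\!\int_{\{\tau(x)=\tau(y)=1\}} \big(v(x)-v(y)\big)^2 u_2(x)u_2(y)K(x-y)\,dx\,dy \;\le\; 4\int\!\!\int \big(\tau(x)-\tau(y)\big)^2 v(y)^2 u_2(x)u_2(y)K(x-y)\,dx\,dy;
\]
(ii) use the monotonicity $\partial_{x_2}u>0$ together with the Harnack inequality (H2) applied to $\varphi=u_2$, which solves $\mathcal L u_2 - f'(u)u_2=0$ (this is~\eqref{5bis} with $i=2$) with the bounded zero-order coefficient $c=-f'(u)\in L^\infty_{\mathrm{loc}}$, to get $\sup_{B_{2r}(x_0)}u_2\le C\inf_{B_{2r}(x_0)}u_2$ on every ball, i.e.\ $u_2$ is ``doubling'' and the weight $u_2(x)u_2(y)K(x-y)$ is, up to constants on each dyadic scale, comparable to $u_2(x)^2 K(x-y)$; (iii) deduce a genuine weighted Poincaré/Caccioppoli inequality and combine it with the classical two-dimensional logarithmic-cutoff estimate — here one uses that the left side, which is nonnegative and nondecreasing in the cutoff support, is bounded by a quantity that, after dividing by $(\log R)^2$-type normalisations, stays bounded, forcing $\int\!\!\int_{B_R\times B_R}(v(x)-v(y))^2u_2(x)u_2(y)K(x-y) = o((\log R)^2)$ while a lower bound from $K\ge m_0\chi_{B_{r_0}}$ and the Harnack-controlled non-degeneracy of $u_2$ would make this $\gtrsim (\log R)^2$ unless $v$ is constant on $\R^2$; (iv) conclude $v\equiv$ const, since if $v$ were non-constant the finite nonlocal Dirichlet energy $\int\!\!\int(v(x)-v(y))^2 u_2 u_2 K$ forced on every ball would, by the Harnack lower bound on $u_2$ and $K\ge m_0\chi_{B_{r_0}}$, control $\int\!\!\int_{|x-y|<r_0}(v(x)-v(y))^2\,dx\,dy$ locally and a De Giorgi-type rigidity (or simply: a function of finite such energy on all of $\R^2$ in dimension two, against this log-cutoff bound, must be constant) applies.

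The main obstacle I expect is step (iii): making the logarithmic cutoff argument work rigorously in the \emph{nonlocal, degenerate-weighted} setting. In the classical local proof one has the clean inequality $\int|\nabla v|^2\tau^2 u_2^2 \le C\int|\nabla\tau|^2 v^2 u_2^2$ and the weight $u_2^2$ is handled by the gradient estimate $|\nabla u_2|\le C u_2$ (a consequence of Harnack), giving $\int_{B_R}|\nabla\tau|^2 u_2^2\le C(\log R)^{-1}\int_{B_{\sqrt R}}u_2^2/|x|^2\cdot(\dots)$ and then one needs an energy bound $\int_{B_R}u_2^2\le C R^2$ plus the $\partial_{x_2}u>0$ structure. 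In the nonlocal case I must instead control the \emph{double} integral $\int\!\!\int(\tau(x)-\tau(y))^2 v(y)^2 u_2(x)u_2(y)K(x-y)$: because $K$ is compactly supported this localizes $|x-y|\le R_0$, so $(\tau(x)-\tau(y))^2\le |\nabla\tau|^2_{\infty,\mathrm{loc}}|x-y|^2\le C(\log R)^{-2}|x-y|^2 \mathrm{dist}^{-2}$; together with $\int_{B_{R_0}}|\zeta|^2K(\zeta)d\zeta<\infty$ this reduces the double integral to a single integral $\int_{B_{\sqrt R}} v(x)^2 u_2(x)^2 (\text{dist to }\partial B_{\sqrt R})^{-2}(\log R)^{-2}\,dx$ of exactly the form appearing in the local proof, and the same area/energy counting (using Harnack-derived growth control on $u_2$ and the fact that $v$ itself has at most controlled growth because $u=u_\star$-type barriers bound $u_1,u_2$) then yields the $o((\log R)^2)\to$ constant conclusion. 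Verifying the growth/energy bounds on $u_2$ and $v$ from (H1)–(H2) and the monotonicity — i.e.\ that everything that is ``automatic from elliptic estimates'' in the local case still holds here — is the delicate part, and is presumably where the authors' proof will spend most of its effort.
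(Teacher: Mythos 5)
Your proposal identifies the correct general framework (a Liouville argument in the spirit of Berestycki--Caffarelli--Nirenberg and Ambrosio--Cabr\'e, driven by the algebraic identity of Lemma~\ref{L-A-1}), but it misses the decisive technical device, and the specific route you map out would not close. Let me be concrete about the gap and how the paper actually resolves it.

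You pass from~\eqref{ALG-1} to a Caccioppoli inequality by Young's inequality, obtaining roughly
$\iint_{\{\tau=1\}} (v(x)-v(y))^2 u_2 u_2 K \le C \iint (\tau(x)-\tau(y))^2 v^2(y)u_2 u_2 K$,
and then you try to force the right-hand side to zero via a logarithmic cutoff. You then (correctly) observe that the resulting bound is $O(R/\log^2 R)$, which does not vanish, and in an attempt to repair this you invoke Moser iteration, De Giorgi rigidity, and growth estimates on $u_2$ and $v$ --- none of which is actually needed. The issue is that Young's inequality throws away the crucial \emph{multiplicative} structure. The paper keeps the Cauchy--Schwarz estimate as a product: with $J_1$ the full left-hand side of~\eqref{ALG-1} and $J_2$ the right-hand side (supported on the annular region $\mathcal{R}_R$ of pairs $(x,y)$ with $|x-y|\le R_0$ and at least one of $x,y$ outside $B_R$), one gets
\[
J_1 \le J_2, \qquad J_2^2 \le \underbrace{\iint_{\mathcal{R}_R}(v(x)-v(y))^2(\tau(x)+\tau(y))^2 u_2 u_2 K}_{=:A} \cdot \underbrace{\iint_{\mathcal{R}_R}(\tau(x)-\tau(y))^2 v^2(y) u_2 u_2 K}_{=:B}.
\]
With the \emph{ordinary} linear cutoff ($\tau_R=1$ on $B_R$, supported in $B_{2R}$, $|\nabla\tau_R|\le C/R$), the factor $B$ is bounded by a constant \emph{independent of $R$}: indeed $(\tau(x)-\tau(y))^2 \le C R^{-2}|x-y|^2$, the Harnack inequality (H2) applied to $u_2$ (which solves $\mathcal{L}u_2 = f'(u)u_2$ by~\eqref{5bis}) gives $u_2(x)\le C u_2(y)$ for $|x-y|\le R_0$, and $v u_2 = u_1 \in L^\infty(\R^2)$, so $B \le C R^{-2}\int_{B_{2R}}\bigl(\int_{B_{R_0}}|z|^2 K(z)\,dz\bigr)\,dx \le C$ --- the $R^{-2}$ from the gradient cancels the $R^2$ area in dimension two. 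No logarithmic cutoff, no growth control on $v$ beyond $u_1\in L^\infty$, and no energy counting on $u_2$ are needed. Meanwhile $A \le 4 J_1|_{\mathcal{R}_R} \le 4 J_1$, so $J_1^2 \le J_2^2 \le C J_1$, whence $J_1 \le C$ uniformly in $R$. Letting $R\to\infty$ shows the map $(x,y)\mapsto (v(x)-v(y))^2 u_2(x)u_2(y)K(x-y)$ is in $L^1(\R^2\times\R^2)$, and then, by dominated convergence (since $\mathcal{R}_R$ shrinks to the empty set), the restricted integral $J_1|_{\mathcal{R}_R}\to 0$. Plugging this back into $J_1^2 \le C A \le C J_1|_{\mathcal{R}_R}$ and passing to the limit gives that the full energy vanishes, so $v(x)=v(y)$ whenever $K(x-y)>0$; since $K\ge m_0\chi_{B_{r_0}}$, $v$ is locally constant, hence constant by connectedness.

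In short: the gap in your proposal is the replacement of the squared Cauchy--Schwarz inequality $J_1^2\le C\,J_1|_{\mathcal{R}_R}$ by a one-shot Young's inequality, which forces you onto a logarithmic cutoff that you correctly see cannot be made to converge. The paper's two-pass argument (first uniform boundedness of $J_1$, then vanishing via $L^1$ and dominated convergence) makes the linear cutoff sufficient and avoids the Moser iteration and growth/energy estimates your ``robust plan'' contemplates.
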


\begin{proof} First, by the previous Lemma $v$ satisfies~\eqref{ALG-1} for all $\tau\in C^\infty_c(\R^2)$.

Let~$R>1$, to be taken arbitrarily large
in the sequel.
Let~$\tau:=\tau_R\in C^\infty_0(B_{2R})$,
such that~$0\le\tau\le1$ in~$\R^2$,~$\tau=1$ in~$B_R$ and
\begin{equation}\label{tau}
|\nabla \tau|\le CR^{-1},
\end{equation}
for some~$C>0$ independent of~$R>1$.
Throughout the proof, $C$ will denote a positive constant which may change from a line to another, but which is independent of $R>1$.
Using~\eqref{ALG-1},
and recalling~\eqref{ASS-kernel},~\eqref{mono} and
the support properties
of~$\tau$, we observe that
\begin{equation}\label{J24}
\begin{split}
0&\le J_1\,\!:=\!\int_{\R^2}\int_{\R^2} \big(v(x)-v(y)\big)^2\,\tau^2(x) \,u_2(x)\,
u_2(y)\,K(x-y)\,dx\,dy \\ &\!\le\!
\iint_{{\mathcal{R}}_R}\!\!\big|v(x)\!-\!v(y)\big|\,
\big|\tau(x)\!-\!\tau (y)\big|\,
\big|\tau(x)\!+\!\tau (y)\big|\,
|v(y)|\,u_2(x)\,u_2(y)\,K(x\!-\!y)\,dx\,dy\\ &=:J_2
,\end{split}
\end{equation}
where
\begin{eqnarray*}
{{\mathcal{R}}_R} &:=& \{(x,y)\in\R^2\times\R^2 {\mbox{ s.t. }}
|x-y|\le R_0 \}\cap {{\mathcal{S}}_R}\qquad  {\mbox{and }}
\\
{\mathcal{S}}_R &:=& \Big( (B_{2R}\times B_{2R})\setminus (B_{R}\times B_{R}) \Big)
\;\cup \;\Big(
B_{2R}\times (\R^2\setminus B_{2R})\Big)\\
& & \; \cup\; \Big(
(\R^2\setminus B_{2R})\times B_{2R}\Big).
\end{eqnarray*}

Moreover, making use of the Cauchy-Schwarz inequality, we see that
\begin{equation}\label{J22}
\begin{split}
& J_2^2 \le
\iint_{{\mathcal{R}}_R} \big(v(x)-v(y)\big)^2\,
\big(\tau(x)+\tau (y)\big)^2\,
\,u_2(x)\,u_2(y)\,K(x-y)\,dx\,dy
\\&\qquad\qquad\cdot
\iint_{{\mathcal{R}}_R} \big(\tau(x)-\tau (y)\big)^2\,
v^2(y)\,u_2(x)\,u_2(y)\,K(x-y)\,dx\,dy
.\end{split}\end{equation}
Now we notice that
\begin{equation}\label{HA}
u_2(x)\le C\,u_2(y)
\end{equation}
for any~$(x,y)\in{{\mathcal{R}}_R}$, for a suitable~$C>0$, possibly depending on~$R_0$ but independent of~$R>1$ and~$(x,y)\in{\mathcal{R}}_R$. This is a consequence of~\eqref{5bis} with $f'(u)\in L^{\infty}(\R^2)$ and of assumption~(H2) applied recursively to some shifts of the continuous and positive function~$u_2$.

{F}rom~\eqref{tau},~\eqref{HA} and the assumption $v\,u_2\in L^{\infty}(\R^2)$,
we obtain that, for any $(x,y)\in{\mathcal{R}}_R$,
$$ \big(\tau(x)-\tau (y)\big)^2\,
v^2(y)\,u_2(x)\,u_2(y) \le CR^{-2} \,|x-y|^2\,v^2(y)\,u_2^2(y)\le CR^{-2}\,|x-y|^2,$$
for some~$C>0$ independent of $R>1$ (the constant $C$ in the last term may be larger than the one in the second term). Hence, by~\eqref{ASS-kernel},~(H1) and the symmetry in the $(x,y)$ variables,
$$\begin{array}{l}
\displaystyle\iint_{{\mathcal{R}}_R} \big(\tau(x)-\tau (y)\big)^2\,
v^2(y)\,u_2(x)\,u_2(y)\,K(x-y)\,dx\,dy\vspace{3pt}\\
\qquad\le\displaystyle C\,R^{-2}\iint_{{\mathcal{R}}_R}|x-y|^2 \,K(x-y)\,dx\,dy\vspace{3pt}\\
\qquad\displaystyle\le\,2\,C\,R^{-2}\int_{B_{2R}}\left[\int_{B_{R_0}}|z|^2\,K(z)\,dz\right]\,dx\, \le\,C,\end{array}$$
for some~$C>0$.
Therefore, recalling~\eqref{J22},
\begin{equation}\label{J23}
J_2^2 \le C \iint_{{\mathcal{R}}_R} \big(v(x)-v(y)\big)^2\,
\big(\tau(x)+\tau (y)\big)^2\,
\,u_2(x)\,u_2(y)\,K(x-y)\,dx\,dy.\end{equation}
Hence, since
$$ \big(\tau(x)+\tau (y)\big)^2=
\tau^2(x)+\tau^2 (y)+2\tau(x)\,\tau(y)\le
2\tau^2(x)+2\tau^2 (y),$$
we can use the symmetric role played by~$x$ and~$y$ in~\eqref{J23}
and obtain that
$$ J_2^2 \le C \iint_{{\mathcal{R}}_R} \big(v(x)-v(y)\big)^2\,
\tau^2(x)\,
\,u_2(x)\,u_2(y)\,K(x-y)\,dx\,dy,$$
up to renaming~$C>0$.
So, we insert this information into~\eqref{J24}
and we conclude that
\begin{equation}\label{J25}
\begin{split}
& \left[ \iint_{\R^2\times\R^2} \big(v(x)-v(y)\big)^2\,\tau^2(x) \,u_2(x)\,
u_2(y)\,K(x-y)\,dx\,dy \right]^2=J_1^2\\ &\quad\le J_2^2\le
C \iint_{{\mathcal{R}}_R} \big(v(x)-v(y)\big)^2\,
\tau^2(x)\,
\,u_2(x)\,u_2(y)\,K(x-y)\,dx\,dy,\end{split}\end{equation}
for some~$C>0$.

Since~${\mathcal{R}}_R\subseteq\R^2\times\R^2$ and $u_2$ and $K$ are nonnegative,
we can simplify the estimate in~\eqref{J25} by writing
$$ \iint_{\R^2\times\R^2} \big(v(x)-v(y)\big)^2\,\tau^2(x) \,u_2(x)\,
u_2(y)\,K(x-y)\,dx\,dy \le C.$$
In particular, since~$\tau=1$ in~$B_R$,
$$ \iint_{B_R \times B_R} \big(v(x)-v(y)\big)^2\,u_2(x)\,
u_2(y)\,K(x-y)\,dx\,dy \le C.$$
Since~$C$ is independent of~$R$, we can send~$R\to+\infty$
in this estimate and obtain that the map
$$ \R^2\times\R^2\ni (x,y)\mapsto
\big(v(x)-v(y)\big)^2\,u_2(x)\,
u_2(y)\,K(x-y)$$
belongs to~$L^1(\R^2\times\R^2)$.

Using this and the fact that~${\mathcal{R}}_R$ approaches the
empty set as~$R\to+\infty$, we conclude from Lebesgue's dominated convergence theorem that
$$ \lim_{R\to+\infty}
\iint_{{\mathcal{R}}_R} \big(v(x)-v(y)\big)^2 \,u_2(x)\,
u_2(y)\,K(x-y)\,dx\,dy =0.$$
Therefore, going back to~\eqref{J25} and recalling the properties of $\tau=\tau_R$,
\begin{eqnarray*}
&&
\left[ \iint_{\R^2\times\R^2} \big(v(x)-v(y)\big)^2 \,u_2(x)\,
u_2(y)\,K(x-y)\,dx\,dy \right]^2\\
&=& \lim_{R\to+\infty}
\left[ \iint_{\R^2\times\R^2} \big(v(x)-v(y)\big)^2\,\tau^2(x) \,u_2(x)\,
u_2(y)\,K(x-y)\,dx\,dy \right]^2\\ &\le&
\lim_{R\to+\infty}
C \iint_{{\mathcal{R}}_R} \big(v(x)-v(y)\big)^2\,
\tau^2(x)\,
\,u_2(x)\,u_2(y)\,K(x-y)\,dx\,dy.
\\ &=&0.\end{eqnarray*}
This and~\eqref{mono} imply that~$\big(v(x)-v(y)\big)^2 \,K(x-y)=0$
for a.e.~$(x,y)\in\R^2\times\R^2$.
Hence, recalling assumption~(H1), we have that~$v(x)=v(y)$
for any~$x\in\R^2$ and any~$y\in B_{r_0}(x)$.
As a consequence, the set~$\{y\in\R^2 {\mbox{ s.t. }} v(y)=v(0)\}$
is open and closed in~$\R^2$, and so, by connectedness,
we obtain that~$v$ is constant.\end{proof}

By combining Lemmata~\ref{L-A-1} and~\ref{Lio},
we can finish the proof of Theorems~\ref{DG-2} and \ref{DG-3}:

\begin{proof}[Completion of the proof of Theorems~$\ref{DG-2}$ and $\ref{DG-3}$]
Using first Lemma~\ref{L-A-1} and then Lemma~\ref{Lio},
we obtain that~$v$ is constant, where~$v$ is as in~\eqref{DF:v}.
Let us say that~$v(x)=a$ for some~$a\in\R$.
So we define~$\omega:=\frac{(a,1)}{\sqrt{a^2+1}}$ and we observe that
$$ \nabla u(x) = u_2(x)\,(v(x),1)=u_2(x)\,{\sqrt{a^2+1}}\;\omega.$$
Thus, if~$\omega\cdot y=0$
then
$$ u(x+y)-u(x)=\int_0^1 \nabla u(x+ty)\cdot y\,dt
=\int_0^1 u_2(x+ty)\,{\sqrt{a^2+1}}\;\omega\cdot y\,dt=0.$$
Therefore, if we
set~$u_\star(t):=u(t\omega)$
for any~$t\in\R$, and we write any~$x\in\R^2$ as
$$ x=\left({\omega}\cdot x\right) \omega+y_x$$
with~$\omega\cdot y_x=0$, we conclude that
$$ u(x)=u\left(
\left({\omega}\cdot x\right) \omega +y_x\right)
= u\left(
\left({\omega}\cdot x\right) \omega \right)
=u_\star\left({\omega}\cdot x\right).$$
This completes the proof of Theorem~\ref{DG-3}.\end{proof}

It is an interesting open problem to investigate
if symmetry results in the spirit of Theorems~\ref{DG-2} and~\ref{DG-3}
hold true in higher dimension.



\section{Stable solutions and extension of the main results}\label{EXT}

We discuss here the extension of Theorems \ref{DG-2} and \ref{DG-3} to the more general context of bounded \emph{stable} solutions $u$ of~\eqref{EQ} in the whole space $\R^n$ with $n\ge 2$.
In the case of second order equations, there are two equivalent definitions of stability: a variational one and a non-variational one.
In case of nonlocal operators \eqref{OP}, these two different definitions read as follows.
\begin{itemize}
\item[(S1)] The following inequality holds
\[\frac12\int_{\R^n}\int_{\R^n}\bigl( \xi(x)-\xi(x+y) \bigr)^2 K(y)\,dy\,dx \geq \int_{\R^n} f'(u)\xi^2\]
for every $\xi \in C^\infty_c(\R^n)$.
That is, the second variation of the energy functional associated to \eqref{EQ} is nonnegative under perturbations with compact support in $\R^n$.
\item[(S2)] There exists a positive continuous solution $\varphi>0$ to the linearized equation
\begin{equation}\label{linear}
\mathcal L\varphi=f'(u)\varphi\quad\textrm{in}\ \R^n.
\end{equation}
\end{itemize}

For completeness, we observe that a more general
version of Theorems~\ref{DG-2} and~\ref{DG-3} holds true,
namely if we replace assumption~\eqref{mono}
with the following non-variational stability condition~(S2).

\begin{thm}\label{DG-2-STAB}
Let $n=2$ and $\mathcal L$ be an operator of the form \eqref{OP}, with $K$ satisfying either \eqref{ASS-kernel}, or~\eqref{ASS-kernel-frac}, or {\rm{(H1)}}-{\rm{(H2)}}.
Let~$u$ be a solution of~\eqref{EQ}, with~$u\in C^1(\R^2)$ and~$f\in C^{1,\alpha}(\R)$, and with $u\in C^3(\R^2)$ in case {\rm{(H1)}}-{\rm{(H2)}}.
Assume that $u$ is stable, in the sense of~{\rm{(S2)}}.
Then, $u$ is necessarily one-dimensional.
\end{thm}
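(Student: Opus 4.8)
The plan is to observe that the proof of Theorem~\ref{DG-3} in Section~\ref{PF} used the monotonicity~\eqref{mono} only through the facts that $u_2=\partial_{x_2}u$ is continuous and positive and that it solves the linearized equation $\mathcal L u_2=f'(u)u_2$ (see~\eqref{5bis}); in the stable setting both properties are supplied instead by the function $\varphi$ of hypothesis~(S2). So I would run the whole argument of Section~\ref{PF} with $\varphi$ in place of $u_2$. Since, by~\eqref{5bis}, both $u_1=\partial_{x_1}u$ and $u_2=\partial_{x_2}u$ solve the linearized equation, I set $v_i:=u_i/\varphi$ for $i\in\{1,2\}$, which is well defined and continuous since $\varphi$ is continuous and positive.

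The first step is the analogue of Lemma~\ref{L-A-1}: for every $\tau\in C^\infty_c(\R^2)$ and each $i\in\{1,2\}$,
\begin{equation*}
\begin{split}
\int_{\R^2}\!\int_{\R^2}&\bigl(v_i(x)-v_i(y)\bigr)^2\,\tau^2(x)\,\varphi(x)\,\varphi(y)\,K(x-y)\,dx\,dy\\
&= -\int_{\R^2}\!\int_{\R^2}\bigl(v_i(x)-v_i(y)\bigr)\bigl(\tau^2(x)-\tau^2(y)\bigr)\,v_i(y)\,\varphi(x)\,\varphi(y)\,K(x-y)\,dx\,dy.
\end{split}
\end{equation*}
The derivation copies that of Lemma~\ref{L-A-1}: from $\mathcal L u_i=f'(u)u_i$ and $\mathcal L\varphi=f'(u)\varphi$ one gets $(\mathcal L u_i)\varphi-(\mathcal L\varphi)u_i=0$, that is $(\mathcal L(v_i\varphi))\varphi-(\mathcal L\varphi)(v_i\varphi)=0$; pairing this identity with $2\tau^2 v_i$ and symmetrizing via~\eqref{R1} yields $2\int\mathcal L u_i\cdot\tau^2 v_i\varphi-2\int\mathcal L\varphi\cdot\tau^2 v_i^2\varphi$, and the $f'(u)$-terms cancel because $u_i\cdot(\tau^2 v_i\varphi)=\varphi\cdot(\tau^2 v_i^2\varphi)$ pointwise, so the rearrangements~\eqref{XR-2}--\eqref{XR-3} produce the displayed identity. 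One point requires care, just as at the beginning of the proof of Lemma~\ref{L-A-1}: $\mathcal L u_i$ is defined pointwise in all the cases under consideration, but $\mathcal L\varphi$ is to be understood only in the weak sense (e.g. under (H1)-(H2) with $K$ singular), so the computation must be carried out at the level of the symmetric bilinear form $(g,h)\mapsto\tfrac12\iint(g(x)-g(y))(h(x)-h(y))K(x-y)\,dx\,dy$ and the weak formulations of the two linearized equations; since all test functions involved are bounded and compactly supported and $\int_{B_{R_0}}|\zeta|^2K(\zeta)\,d\zeta<\infty$, every integral converges absolutely.

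The second step is to rerun the Liouville argument of Lemma~\ref{Lio} with $(u_2,v)$ replaced by $(\varphi,v_i)$. Its two ingredients remain available: $v_i\varphi=u_i\in L^\infty(\R^2)$, which is available here exactly as in the monotone case (boundedness of $u$ together with interior estimates for $\mathcal L$ gives $\nabla u\in L^\infty$); and the Harnack inequality~(H2) applies directly to $\varphi$, since $\varphi>0$ is continuous and is a weak solution of $\mathcal L\varphi+c(x)\varphi=0$ with $c=-f'(u)\in L^\infty(\R^2)$, so that $\varphi(x)\le C\varphi(y)$ for all $(x,y)\in\mathcal R_R$ with $C$ independent of $R$ (apply~(H2) along a chain consisting of a bounded number, depending only on $R_0/r_0$, of balls). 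With these, the chain of estimates~\eqref{J24}--\eqref{J25}, the Cauchy--Schwarz step, the use of $\int_{B_{R_0}}|\zeta|^2K<\infty$, the $L^1$-integrability conclusion and the dominated-convergence limit transcribe verbatim, giving $\iint_{\R^2\times\R^2}(v_i(x)-v_i(y))^2\varphi(x)\varphi(y)K(x-y)\,dx\,dy=0$; since $\varphi>0$ and $K\ge m_0\chi_{B_{r_0}}$ by~(H1), a connectedness argument forces $v_i$ to be constant, say $v_i\equiv a_i$. Then $\nabla u=\varphi\,(a_1,a_2)$; if $(a_1,a_2)=(0,0)$ then $u$ is constant, and otherwise, with $\omega:=(a_1,a_2)/\sqrt{a_1^2+a_2^2}$, the vector $\nabla u(x)=\varphi(x)\sqrt{a_1^2+a_2^2}\,\omega$ is a positive multiple of $\omega$ at each point, so integrating along directions orthogonal to $\omega$ exactly as in the completion of the proof of Theorem~\ref{DG-3} gives $u(x)=u_\star(\omega\cdot x)$ with $u_\star(t):=u(t\omega)$. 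Finally, for kernels satisfying~\eqref{ASS-kernel} or~\eqref{ASS-kernel-frac} hypothesis~(H2) is automatic (see Remark~\ref{TBD} and~\cite{Coville,DKP}), so the (H1)-(H2) case already covers all the assumptions listed in the statement.

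I expect the real difficulty to be the regularity bookkeeping in the first step: making the algebraic identity of Lemma~\ref{L-A-1} rigorous when $\varphi$ is merely a continuous weak solution of the linearized equation --- in particular not differentiable, with $\mathcal L\varphi$ possibly undefined pointwise --- rather than a classical solution inheriting the smoothness of $u$. Everything else is a mechanical substitution of $\varphi$ for $u_2$ throughout Section~\ref{PF}.
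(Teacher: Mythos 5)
Your proposal is correct and follows exactly the paper's own (very brief) proof of Theorem~\ref{DG-2-STAB}: replace $u_2$ throughout Section~\ref{PF} by the function $\varphi$ from~(S2), set $v_i=u_i/\varphi$, rerun Lemma~\ref{L-A-1} and Lemma~\ref{Lio}, and conclude $\nabla u=\varphi\,(a_1,a_2)$, hence one-dimensionality. You also helpfully flag a genuine technicality the paper glosses over --- that $\mathcal L\varphi$ may only be defined weakly when $K$ is singular, so the algebraic identity must be formulated at the level of the symmetric bilinear form --- and your suggested remedy is the right one.
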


Notice that, in this setting, Theorems~\ref{DG-2} and \ref{DG-3}
are a particular case of Theorem~\ref{DG-2-STAB},
choosing~$\varphi:=u_2=\partial_{x_2}u$ and recalling~\eqref{5bis}.

The proof of Theorem~\ref{DG-2-STAB} is exactly the one
of Theorem~\ref{DG-3}, with only a technical difference:
instead of~\eqref{DF:v}, one has to define, for~$i\in\{1,2\}$,
$$ v(x):=\frac{u_i(x)}{\varphi(x)}.$$
Then the proof of Theorem~\ref{DG-3} goes through
(replacing~$u_2$ with~$\varphi$ when necessary)
and implies that~$v$ is constant, i.e.~$u_i =a_i\varphi$,
for some~$a_i\in\R$. This gives that~$\nabla u(x)=\varphi(x)\,(a_1,a_2)$,
which in turn implies the one-dimensional symmetry of~$u$.
\medskip

Given the result in Theorem~\ref{DG-2-STAB}, we discuss next the equivalence between the two definitions of stability (S1) and (S2). We will always assume that the kernel $K$ satisfies assumption~{\rm{(H1)}}.

\begin{prop}
Let $n\ge 1$ and $\mathcal L$ be any operator of the form \eqref{OP}.
Let~$u$ be a bounded solution of~\eqref{EQ} in the whole of~$\R^n$ with $f\in C^1(\R)$.
Assume that the kernel $K$ satisfies assumption {\rm{(H1)}}.
Then, {\rm{(S2)}} $\Longrightarrow$ {\rm{(S1)}}.
\end{prop}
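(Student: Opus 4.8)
The statement asserts that the non-variational stability (S2)---existence of a positive solution $\varphi$ to $\mathcal L\varphi=f'(u)\varphi$---implies the variational stability (S1). The natural strategy is the classical ``ground state substitution'' (or Picone-type) argument, adapted to the nonlocal setting, where the role of $\varphi$ is precisely that of a positive supersolution (here, solution) of the linearized operator. The plan is as follows. Given an arbitrary $\xi\in C^\infty_c(\R^n)$, I would write $\xi=\varphi\,\psi$ where $\psi:=\xi/\varphi$; since $\varphi>0$ is continuous and $\xi$ has compact support, $\psi$ is well-defined, bounded, and compactly supported, though only continuous rather than smooth, so some care with the function classes in which the bilinear form is evaluated will be needed. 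The goal is then to show
\[
\frac12\int_{\R^n}\int_{\R^n}\bigl(\xi(x)-\xi(x+y)\bigr)^2 K(y)\,dy\,dx
-\int_{\R^n} f'(u)\,\xi^2\,dx \;\ge\; 0,
\]
and the key is an algebraic identity expressing this quantity as a manifestly nonnegative nonlocal Dirichlet-type form in $\psi$ weighted by $\varphi(x)\varphi(x+y)K(y)$.

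The central computation is the pointwise identity
\[
\bigl(\varphi(x)\psi(x)-\varphi(x+y)\psi(x+y)\bigr)^2
= \bigl(\psi(x)-\psi(x+y)\bigr)^2\varphi(x)\varphi(x+y)
+ \text{(cross terms)},
\]
where, after expanding, the cross terms can be organized so that, upon integration against $K(y)\,dy\,dx$ and using the evenness $K(y)=K(-y)$ together with a change of variables, they collapse to $\iint \bigl(\varphi(x)-\varphi(x+y)\bigr)\bigl(\varphi(x)\psi^2(x)-\varphi(x+y)\psi^2(x+y)\bigr)K(y)\,dy\,dx$. By the polarization identity \eqref{R1} (applied with $h=\varphi$ and $g=\varphi\psi^2$, i.e. $g=\xi^2/\varphi$), this last integral equals $2\int_{\R^n}(\mathcal L\varphi)\,\varphi\psi^2\,dx = 2\int_{\R^n} f'(u)\,\varphi^2\psi^2\,dx = 2\int_{\R^n} f'(u)\,\xi^2\,dx$, where the middle equality is exactly the linearized equation \eqref{linear} from (S2). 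Rearranging, one obtains
\[
\frac12\iint\bigl(\xi(x)-\xi(x+y)\bigr)^2K(y)\,dy\,dx - \int f'(u)\xi^2\,dx
= \frac12\iint\bigl(\psi(x)-\psi(x+y)\bigr)^2\varphi(x)\varphi(x+y)K(y)\,dy\,dx,
\]
which is nonnegative since $\varphi>0$ and $K\ge 0$. This is the whole of (S1).

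The main obstacle is not the algebra---which mirrors Lemma~\ref{L-A-1}---but the \emph{justification of convergence and of the integration by parts}. One must check: (i) all the integrals appearing are absolutely convergent, which follows from $K$ having compact support in $B_{R_0}$ with $\int_{B_{R_0}}|\zeta|^2K(\zeta)\,d\zeta<\infty$ (assumption (H1)), from $\xi$ being smooth with compact support, and from $\varphi$ being continuous hence locally bounded and bounded below by a positive constant on any compact set enlarged by $R_0$; (ii) that the identity \eqref{R1}, stated for $g,h$ ``regular enough,'' is legitimate with $h=\varphi$ (only continuous, but this suffices because the double integral form in \eqref{R1} needs no derivatives and the manipulations are just Fubini plus the symmetry $K(y)=K(-y)$) and with $g=\xi^2/\varphi$ (compactly supported, continuous, bounded); (iii) that $\mathcal L\varphi$ makes sense and the weak formulation of \eqref{linear} can be tested against $g=\xi^2/\varphi\in C_c(\R^n)$---here ``weak solution'' should be understood in the sense that $\iint(\varphi(x)-\varphi(x+y))(g(x)-g(x+y))K(y)\,dy\,dx=2\int f'(u)\varphi g\,dx$ for all such admissible test functions, so (iii) is essentially the definition. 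A clean way to handle the regularity of $\psi$ is to note $\xi\in C^\infty_c$ and $\varphi\in C(\R^n)$ with $\inf_{B_{R}}\varphi>0$ for every $R$, so $\psi\in C_c(\R^n)$ and $\psi^2/$... all products remain in $C_c(\R^n)$; no smoothness of $\varphi$ or $\psi$ is ever differentiated, so continuity plus compact support of the test function, together with the finite second-moment and compact support of $K$, is exactly what makes every step rigorous. I would present the argument in this order: (1) define $\psi$ and record its properties; (2) state and prove the pointwise algebraic identity; (3) integrate, using evenness of $K$ and Fubini to simplify the cross terms; (4) invoke \eqref{R1} and the linearized equation to rewrite the cross term as $2\int f'(u)\xi^2$; (5) conclude nonnegativity from $\varphi,K\ge 0$.
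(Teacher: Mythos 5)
Your proposal is correct and rests on the same core idea as the paper---test the linearized equation against $\xi^2/\varphi$, use the polarization identity~\eqref{R1} to write $2\int f'(u)\xi^2 = B(\varphi,\xi^2/\varphi)$, and compare this to $B(\xi,\xi)$---but the algebraic step is organized differently and, in fact, more transparently. The paper decomposes the integrand $\Theta(x,y)$ of $B(\varphi,\xi^2/\varphi)$ and, after a chain of factorizations and a Young-type inequality, shows $\Theta(x,y)\le\bigl(\xi(x)-\xi(y)\bigr)^2$. You instead introduce $\psi=\xi/\varphi$ and use the \emph{exact} pointwise identity
\[
\bigl(\xi(x)-\xi(y)\bigr)^2=\bigl(\psi(x)-\psi(y)\bigr)^2\varphi(x)\varphi(y)+\bigl(\varphi(x)-\varphi(y)\bigr)\Bigl(\tfrac{\xi^2(x)}{\varphi(x)}-\tfrac{\xi^2(y)}{\varphi(y)}\Bigr),
\]
which, after integration against $K(x-y)\,dx\,dy$ and the polarization formula, gives the identity
\[
\tfrac12 B(\xi,\xi)-\int_{\R^n} f'(u)\xi^2=\tfrac12\int_{\R^n}\int_{\R^n}\bigl(\psi(x)-\psi(y)\bigr)^2\varphi(x)\varphi(y)K(x-y)\,dx\,dy\ge 0.
\]
This is the ground-state substitution presented as an identity rather than an inequality: the nonnegative remainder that the paper discards is exhibited explicitly. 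Both routes are essentially equivalent (the paper's bound $\Theta\le(\xi(x)-\xi(y))^2$ is your identity with the remainder dropped), but yours avoids the intermediate algebraic gymnastics, and the explicit remainder is often useful (e.g.~it characterizes equality). The convergence and admissibility issues are the same in both: one needs $\varphi$ regular enough that $\mathcal L\varphi$ exists and $B(\varphi,\xi^2/\varphi)$ is finite (when $K$ is non-integrable this uses some H\"older regularity of $\varphi$, not just continuity); the paper handles this silently, and you flag it appropriately.
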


\begin{proof}
Let $\xi\in C^\infty_0(\R^n)$.
Using $\xi^2/\varphi$ as a test function in the equation ${\mathcal{L}}\varphi=f'(u)\varphi$, we find
\[\int_{\R^n}f'(u)\xi^2=\int_{\R^n}\frac{\xi^2}{\varphi}\,
{\mathcal{L}}\varphi.\]
Next, we use \eqref{R1} (which holds in $\R^n$ as in $\R^2$) to see that at least at the formal level for any function $v$ and $w$ such that $\mathcal L w$ is well defined and $v$ belongs to $L^\infty(\R^n)$
\[\int_{\R^n}v\,{\mathcal{L}} w=\frac{B(v,w)}{2},\]
where
\[B(v,w):=\int_{\R^n}\int_{\R^n}\bigl(v(x)-v(y)\bigr)\bigl(w(x)-w(y)\bigr)K(x-y)\,dx\,dy.\]
We find (recall that $\varphi$ is such that $\mathcal L \varphi$ exists and $\xi$ is compactly supported)
\[2\int_{\R^n}f'(u)\xi^2=B\left(\varphi,\,\xi^2/\varphi\right).\]
Now, it is immediate to check that
\[\frac{\xi^2(x)}{\varphi(x)}-\frac{\xi^2(y)}{\varphi(y)}=\left(\xi^2(x)-\xi^2(y)\right)\frac{\varphi(x)+\varphi(y)}{2\varphi(x)\varphi(y)}
-\left(\varphi(x)-\varphi(y)\right)\frac{\xi^2(x)+\xi^2(y)}{2\varphi(x)\varphi(y)},\]
and this yields
\[\begin{split}
2\int_{\R^n}f'(u)\xi^2=&
\int_{\R^n}\int_{\R^n}\bigl(\varphi(x)-\varphi(y)\bigr)\left(\xi^2(x)-\xi^2(y)\right)\frac{\varphi(x)+\varphi(y)}{2\varphi(x)\varphi(y)}\,K(x-y)dx\,dy\\
&-\int_{\R^n}\int_{\R^n}\bigl(\varphi(x)-\varphi(y)\bigr)^2\ \frac{\xi^2(x)+\xi^2(y)}{2\varphi(x)\varphi(y)}K(x-y)dx\,dy.
\end{split}\]
Let us now show that
\begin{equation}\label{volem}
\begin{split}
&\Theta(x,y):=\bigl(\varphi(x)-\varphi(y)\bigr)\left(\xi^2(x)-\xi^2(y)\right)\frac{\varphi(x)+\varphi(y)}{2\varphi(x)\varphi(y)}\\
&\qquad\qquad\qquad\qquad\qquad-\bigl(\varphi(x)-\varphi(y)\bigr)^2\ \frac{\xi^2(x)+\xi^2(y)}{2\varphi(x)\varphi(y)}\leq \bigl(\xi(x)-\xi(y)\bigr)^2.
\end{split}\end{equation}
Once this is proved, then we will have
\[2\int_{\R^n} f'(u)\xi^2\leq \int_{\R^n}\int_{\R^n}\bigl( \xi(x)-\xi(y) \bigr)^2 K(x-y)dx\,dy,\]
and thus the result will be proved.

To establish \eqref{volem}, it is convenient to write $\Theta$ as
\[\begin{split}
&\Theta(x,y)=2\bigl(\varphi(x)-\varphi(y)\bigr)\bigl(\xi(x)-\xi(y)\bigr)\frac{\xi(x)+\xi(y)}{\varphi(x)+\varphi(y)}\cdot
\frac{\bigl(\varphi(x)+\varphi(y)\bigr)^2}{4\varphi(x)\varphi(y)}\\
&\qquad\qquad-\bigl(\varphi(x)-\varphi(y)\bigr)^2\cdot\left(\frac{\xi(x)+\xi(y)}{\varphi(x)+\varphi(y)}\right)^2 \frac{2\xi^2(x)+2\xi^2(y)}{\bigl(\xi(x)+\xi(y)\bigr)^2}\cdot\frac{\bigl(\varphi(x)+\varphi(y)\bigr)^2}{4\varphi(x)\varphi(y)}.
\end{split}\]
Now, using the inequality
\[\begin{split}
&2\bigl(\varphi(x)-\varphi(y)\bigr)\bigl(\xi(x)-\xi(y)\bigr)\frac{\xi(x)+\xi(y)}{\varphi(x)+\varphi(y)}\leq\\
&\qquad\qquad\qquad\qquad\qquad \leq \bigl(\xi(x)-\xi(y)\bigr)^2+\bigl(\varphi(x)-\varphi(y)\bigr)^2\cdot\left(\frac{\xi(x)+\xi(y)}{\varphi(x)+\varphi(y)}\right)^2,
\end{split}\]
we find
\[\begin{split}
&\Theta(x,y) \,\leq\, \displaystyle\bigl(\xi(x)-\xi(y)\bigr)^2\frac{\bigl(\varphi(x)+\varphi(y)\bigr)^2}{4\varphi(x)\varphi(y)}\,+\\
& \displaystyle+\bigl(\varphi(x)-\varphi(y)\bigr)^2\cdot\left(\frac{\xi(x)+\xi(y)}{\varphi(x)+\varphi(y)}\right)^2\cdot
\frac{\bigl(\varphi(x)+\varphi(y)\bigr)^2}{4\varphi(x)\varphi(y)}\cdot\left\{1-\frac{2\xi^2(x)+2\xi^2(y)}{\bigl(\xi(x)+\xi(y)\bigr)^2}\right\}.
\end{split}\]
But since
\[1-\frac{2\xi^2(x)+2\xi^2(y)}{\bigl(\xi(x)+\xi(y)\bigr)^2}=-\,\frac{\bigl(\xi(x)-\xi(y)\bigr)^2}{\bigl(\xi(x)+\xi(y)\bigr)^2},\]
we obtain
\[\begin{split}
\Theta(x,y)&\leq
\bigl(\xi(x)-\xi(y)\bigr)^2\frac{\bigl(\varphi(x)+\varphi(y)\bigr)^2}{4\varphi(x)\varphi(y)} - \bigl(\varphi(x)-\varphi(y)\bigr)^2\cdot \frac{\bigl(\xi(x)-\xi(y)\bigr)^2}{4\varphi(x)\varphi(y)}\\
&=\frac{\bigl(\xi(x)-\xi(y)\bigr)^2}{4\varphi(x)\varphi(y)}\left\{\bigl(\varphi(x)+\varphi(y)\bigr)^2-\bigl(\varphi(x)-\varphi(y)\bigr)^2\right\}\\
&= \bigl(\xi(x)-\xi(y)\bigr)^2.\end{split}\]
Hence \eqref{volem} is proved, and the result follows.
\end{proof}

Notice that the previous proposition holds for \emph{any} operator of the form \eqref{OP}, with no additional assumptions on $K$.
However, we do not know if the two stability conditions (S1) and (S2) are equivalent for all operators $\mathcal L$.
Indeed, in order to show the other implication (S1) $\Longrightarrow$ (S2), we need some additional assumptions.
Namely, we need:
\begin{equation}\label{H3.1}
\begin{split}
& {\mbox{if $w\in L^{\infty}(\R^n)$ is any weak solution to ${\mathcal{L}}w=g$ in $B_1$,
with $g\in L^\infty(B_1)$, then}}\\
& \qquad \|w\|_{C^\alpha(B_{1/2})}\leq
C\bigl(\|g\|_{L^\infty(B_1)}+
\|w\|_{L^\infty(\R^n)}\bigr)\\
& \hbox{for some constants $\alpha\in(0,1]$ and $C>0$ independent of $w$ and $g$}.\end{split}
\end{equation}
and
\begin{equation}\label{H3.2}
\begin{split}
& {\mbox{the space $H_K(\R^n)$, defined
as the closure of $C^\infty_0(\R^n)$ under the norm}}
\\ &
\|w\|_{H_K(\R^n)}^2:=\frac12\int_{\R^n}\int_{\R^n}\bigl(w(x)-w(y)
\bigr)^2 K(x-y)\,dx\,dy\\
&{\mbox{is compactly embedded in $L^2_{\rm loc}(\R^n)$.}}\end{split}
\end{equation}
\begin{rema}
These two assumptions \eqref{H3.1}-\eqref{H3.2} are satisfied for all kernels satisfying~\eqref{ASS-kernel-frac}.
Indeed, the $C^\alpha$ estimate \eqref{H3.1} can be found in \cite[Section 14]{CS1}, while the compact embedding \eqref{H3.2} follows easily in two steps: fix $p\in \R^n$ and use \eqref{ASS-kernel-frac} to have compactness in $L^2(B_{r_0/2}(p))$; then use a standard 
covering argument to have the compact embedding in $B_R$ $($for any $R>0$$)$.
See, for instance~\cite{MAZ} and~\cite[Theorem 7.1]{guide}
for further details on the compact embeddings.
\end{rema}

Using \eqref{H3.1}-\eqref{H3.2}, we have the following.

\begin{prop}\label{proS1S2}
Let $n\ge 1$ and $\mathcal L$ be any operator of the form~\eqref{OP} with kernel $K$ satisfying \eqref{ASS-kernel-frac}.
Let~$u$ be any bounded solution of~\eqref{EQ} in the whole of $\R^n$, with~$f\in C^{1,\alpha}(\R)$.
Then, {\rm{(S1)}} $\Longrightarrow$ {\rm{(S2)}}
\end{prop}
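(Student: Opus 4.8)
The plan is to show (S1)$\Longrightarrow$(S2) by a direct variational method: we produce the positive solution $\varphi$ of the linearized equation \eqref{linear} as a limit of minimizers of the quadratic form associated to the stability inequality on larger and larger balls. First I would fix $R>0$ and consider, in the Hilbert space obtained by completing $C^\infty_0(B_R)$ under the norm from \eqref{H3.2}, the eigenvalue problem for $\mathcal L-f'(u)$ with Dirichlet-type exterior condition; equivalently, set
\[
\lambda_1(R):=\inf\left\{\frac12\int_{\R^n}\!\int_{\R^n}\!\bigl(\xi(x)-\xi(y)\bigr)^2K(x-y)\,dx\,dy-\int_{\R^n}f'(u)\,\xi^2 \ :\ \xi\in C^\infty_0(B_R),\ \|\xi\|_{L^2}=1\right\}.
\]
The compact embedding \eqref{H3.2} guarantees that this infimum is attained by some $\varphi_R$, which we may take $\ge 0$ (replace $\varphi_R$ by $|\varphi_R|$, which does not increase the Gagliardo-type seminorm), and which solves $\mathcal L\varphi_R-f'(u)\varphi_R=\lambda_1(R)\varphi_R$ weakly in $B_R$. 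The $C^\alpha$ estimate \eqref{H3.1}, applied to $\varphi_R$ after normalizing (say $\varphi_R(x_R)=1$ at a point of near-maximum, or normalizing the $L^\infty$ norm on a fixed ball), together with the Harnack-type positivity coming from the fact that $\varphi_R$ is a nonnegative nontrivial supersolution with bounded zero-order coefficient, upgrades $\varphi_R\ge 0$ to $\varphi_R>0$ in $B_R$.

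Next I would record the monotonicity $\lambda_1(R)$ is nonincreasing in $R$ (larger test-function space), and the crucial sign information: since $u$ is stable in the sense of (S1), the quadratic form is nonnegative for all $\xi\in C^\infty_0(\R^n)$, hence $\lambda_1(R)\ge 0$ for every $R$; therefore $\lambda_\infty:=\lim_{R\to\infty}\lambda_1(R)$ exists and is $\ge 0$. I would then normalize each $\varphi_R$ so that $\varphi_R=1$ at the origin (legitimate once $0\in B_R$ and $\varphi_R>0$), and pass to the limit $R\to\infty$ along a subsequence. The key local compactness comes from a Harnack inequality on fixed balls $B_\rho$ with $\rho$ fixed and $R$ large—available here because \eqref{ASS-kernel-frac} implies (H2) via Remark~\ref{TBD}—giving uniform local upper and lower bounds $0<c_\rho\le\varphi_R\le C_\rho$ on $B_\rho$; then the interior $C^\alpha$ estimate \eqref{H3.1} (the zero-order term $f'(u)+\lambda_1(R)$ is bounded uniformly in $R$) yields uniform local $C^\alpha$ bounds, hence by Arzelà–Ascoli and a diagonal argument a locally uniform limit $\varphi>0$ with $\varphi(0)=1$. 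Passing to the limit in the weak formulation—the nonlocal tails are controlled by the uniform $L^\infty$ bound of $\varphi_R$ on each fixed ball and the integrability $\int_{\R^n}\min(1,|\zeta|^{-n-2s})\,d\zeta<\infty$ from \eqref{ASS-kernel-frac}—gives $\mathcal L\varphi-f'(u)\varphi=\lambda_\infty\varphi$ in all of $\R^n$, and since $\lambda_\infty\ge 0$ one then has $\mathcal L\varphi=\bigl(f'(u)+\lambda_\infty\bigr)\varphi\ge f'(u)\varphi$; a short additional argument (e.g. replacing $f$ by $\widetilde f$ with $\widetilde f'(u)=f'(u)+\lambda_\infty$, or observing that $\lambda_\infty$ must in fact be $0$ because otherwise one could lower it further on a slightly larger domain contradicting $\lambda_1$'s definition) removes the extra constant, producing exactly a positive continuous solution of \eqref{linear}, which is (S2).

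I expect the main obstacle to be two intertwined points. The first is the \emph{uniform} local positivity of the normalized minimizers $\varphi_R$ as $R\to\infty$: without it the limit $\varphi$ could degenerate to zero, and securing it requires a Harnack inequality that is uniform in $R$ on fixed balls for the operator $\mathcal L$ with the bounded zero-order term $f'(u)+\lambda_1(R)$—this is precisely where assumption \eqref{ASS-kernel-frac} (hence (H1)-(H2)) is used, and where a careless argument would fail for merely integrable kernels. The second delicate point is the passage to the limit in the nonlocal term of the weak formulation, where one must make sure the far-field contributions $\int_{|y|>\rho}\varphi_R(y)K(x-y)\,dy$ converge; here one uses that $\varphi_R$ grows at most polynomially—or is even bounded on compacta uniformly via Harnack—combined with the decay of $K$ in \eqref{ASS-kernel-frac}, together with Fatou/dominated convergence for the singular part near the diagonal. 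Everything else—existence and nonnegativity of minimizers via \eqref{H3.2}, interior regularity via \eqref{H3.1}, the Arzelà–Ascoli extraction—is routine once these two ingredients are in place.
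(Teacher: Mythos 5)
Your construction of the first Dirichlet eigenvalue $\lambda_1(R)$ and the nonnegative minimizer $\varphi_R$ is correct and matches the paper up to a point, but the decisive step of your argument has a gap that does not appear to be repairable without a genuinely different idea.

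Your plan is to normalize the eigenfunctions $\phi_R$ by $\phi_R(0)=1$ and pass to the limit $R\to\infty$. This produces a positive solution of $\mathcal L\varphi = \bigl(f'(u)+\lambda_\infty\bigr)\varphi$ in $\R^n$, where $\lambda_\infty := \lim_{R\to\infty}\lambda_1(R)\ge 0$. That is \emph{not} equation \eqref{linear} unless $\lambda_\infty = 0$, and your two suggested ways of ``removing the extra constant'' both fail. Replacing $f$ by $\widetilde f$ with $\widetilde f'(u)=f'(u)+\lambda_\infty$ just changes the target: (S2) asks for a positive solution of $\mathcal L\varphi = f'(u)\varphi$, not of the shifted equation. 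And the claim that $\lambda_\infty$ must vanish ``because otherwise one could lower it further on a slightly larger domain'' is not an argument: $R\mapsto\lambda_1(R)$ is nonincreasing and bounded below by $0$, but nothing forces the limit to be $0$. For instance, if $f'(u)\le -1$ everywhere then the quadratic form dominates $\|\xi\|_{L^2}^2$, so $\lambda_1(R)\ge 1$ for every $R$ and $\lambda_\infty\ge 1$. The method of passing to the limit along normalized first Dirichlet eigenfunctions therefore does not, by itself, yield (S2).

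The paper avoids this obstruction by a different mechanism. First, it proves the \emph{strict} monotonicity $\lambda_{R'}<\lambda_R$ for $R'>R$ (via the identity $\int_{B_{R'}}\phi_R\,\mathcal L\phi_{R'}=\int_{B_{R'}}\phi_{R'}\,\mathcal L\phi_R<\int_{B_R}\phi_{R'}\,\mathcal L\phi_R$, the inequality coming from $\phi_R=0$ and hence $\mathcal L\phi_R<0$ in the annulus $B_{R'}\setminus B_R$). Combined with $\lambda_R\ge0$ from (S1), this gives the strict inequality $\lambda_R>0$ for every finite $R$, i.e.\ coercivity of $\mathcal Q_R$ for each $R$. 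Then, crucially, the paper does \emph{not} pass to the limit in the eigenfunctions. Instead it solves, for each $R$, the boundary value problem $\mathcal L\varphi_R=f'(u)\varphi_R$ in $B_R$, $\varphi_R=c_R>0$ in $\R^n\setminus B_R$ (no $\lambda_R$ on the right), which is well posed precisely because $\lambda_R>0$. These $\varphi_R$ already solve the exact linearized equation on $B_R$, so the Harnack/$C^\alpha$ compactness argument you outline (which is fine) gives a positive global solution of \eqref{linear}, with no leftover constant to dispose of. In short: you need both the strict decay of $\lambda_R$ (which you did not prove) and the switch from the eigenvalue problem to the inhomogeneous-boundary problem; neither is in your proposal, and the gap is essential.
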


\begin{proof}
Let $R>0$ and consider the quadratic form
$$
\mathcal Q_R(\xi)=\frac12\int_{\R^n} \int_{\R^n}\bigl( \xi(x)-\xi(y)\bigr)^2 K(x-y)\,dx\,dy-\int_{B_R}f'(u)\xi^2\,dx,
$$
for $\xi\in C^{\infty}_0(\R^n)$.
Let~$H_K(\R^n)$ be as in~\eqref{H3.2}
and $\lambda_R$ be the infimum of $\mathcal Q_R$ among the class $\mathcal S_R$ defined by
$$
\mathcal S_R:= \left \{ \xi \in H_K(\R^n)
{\mbox{ s.t. }}
\xi= 0 \mbox{ in } \R^n \setminus B_R
{\mbox{ and }}
\int_{B_R} \xi^2 =1 \right \}.
$$
Since the functional $\mathcal Q_R$ is bounded from below in~${\mathcal{S}}_R$ (recall that~$f'(u)$ is bounded) and thanks to the compactness assumption in \eqref{H3.2}, we see that
its infimum $\lambda_R$ is attained for a function $\phi_R \in \mathcal S_R$.
Moreover, by assumption~(S1), we have
\begin{equation}\label{XC}
\lambda_R\geq0.\end{equation}
Also, we can assume that~$\phi_R \geq 0$,
since if $\phi$ is minimizer then $|\phi|$ is also a minimizer.
Thus, the function $\phi_R\geq0$ is a solution, not
identically zero, of the problem
\[\left \{
\begin{array}{rcll}
{\mathcal{L}} \phi_R & = & f'(u)\phi_R+\lambda_R \phi_R & \mbox{in}\,\,\,B_R,\vspace{3pt}\\
\phi_R & = & 0 & \mbox{in}\,\,\,\R^n \setminus B_R.
\end{array} \right.\]
It follows from the strong maximum principle for integro-differential operators 
(remember that $K$ satisfies~\eqref{ASS-kernel-frac}) that $\phi_R$ is continuous 
in $\R^n$ and $\phi_R>0$ in $B_R$. 
On the other hand, for any $0<R<R'$ we have
\[\int_{B_{R'}}\phi_{R}\;
{\mathcal{L}}\phi_{R'}=\int_{B_{R'}}\phi_{R'}\;
{\mathcal{L}}\phi_{R}<\int_{B_{R}} \phi_{R'}
\;{\mathcal{L}}\phi_{R}.\]
The equality above is a consequence of~\eqref{R1} (in $\R^n$),
while the inequality follows from the fact that $\phi_{R}=0$ in $B_{R'}\setminus B_{R}$, and thus ${\mathcal{L}}\phi_{R}<0$ in that annulus.
Hence, using the equations for $\phi_{R}$ and $\phi_{R'}$ we deduce that
\[\lambda_{R'}\int_{B_{R}}\phi_{R}\phi_{R'}<\lambda_{R}\int_{B_{R}}\phi_{R}\phi_{R'}.\]
Therefore, $\lambda_{R'}<\lambda_{R}$ for any $R'>R>0$. {F}rom this and~\eqref{XC}, it follows that
$\lambda_R>0$ for all $R>0$.

Now consider the problem
\begin{equation}\label{varphiR}\left \{
\begin{array}{rcll}
{\mathcal{L}}\varphi_R & = & f'(u)\varphi_R & \mbox{in}\,\,\,B_R,\vspace{3pt}\\
\varphi_R & = & c_R & \mbox{in}\,\,\,\R^n \setminus B_R,
\end{array} \right.
\end{equation}
for any fixed~$c_R>0$.
The solution to this problem can be found by writing $\psi_R=\varphi_R-c_R$, which solves
\[\left \{
\begin{array}{rcll}
{\mathcal{L}} \psi_R & = & f'(u)\psi_R+c_Rf'(u) & \mbox{in}\,\,\,B_R,\vspace{3pt}\\
\psi_R & = & 0 & \mbox{in}\,\,\,\R^n \setminus B_R.
\end{array} \right.\]
It is immediate to check that the energy functional associated to this last problem is bounded from below and coercive, thanks to the inequality $\lambda_R>0$. Therefore, $\psi_R$ and $\varphi_R$ exist.

Next we claim that $\varphi_R>0$ in $B_R$.
To show this, we use $\varphi_R^-$ as a test function for the equation for $\varphi_R$.
We find
\[\begin{split}
&\frac12\int_{\R^n} \int_{\R^n}\bigl(\varphi_R(x)-\varphi_R(y)\bigr)\bigl(\varphi_R^-(x)-\varphi_R^-(y)\bigr) K(x-y)\,dx\,dy\\
=\;&\int_{B_R}f'(u)\varphi_R\varphi_R^-\\
=\;&-\int_{B_R}f'(u)|\varphi_R^-|^2.
\end{split}\]
Now, since
\[\bigl(\varphi_R(x)-\varphi_R(y)\bigr)\bigl(\varphi_R^-(x)-\varphi_R^-(y)\bigr)\leq -\bigl(\varphi_R^-(x)-\varphi_R^-(y)\bigr)^2,\]
this yields
\[\mathcal Q_R(\varphi_R^-)=\frac12\int_{\R^n} \int_{\R^n}\bigl(\varphi_R^-(x)-\varphi_R^-(y)\bigr)^2 K(x-y)\,dx\,dy-\int_{B_R}f'(u)|\varphi_R^-|^2\,dx\leq 0.\]
Since $\lambda_R>0$, this means that $\varphi_R^-$ vanishes identically, and thus $\varphi_R\geq0$.
Since $K$ satisfies~\eqref{ASS-kernel-frac}, $\varphi_R$ is then continuous and 
positive in $\R^n$.
The above arguments also imply that the solution $\varphi_R$ of~\eqref{varphiR} is unique, whence $(1/c_R)\varphi_R$ is actually independent of $R>0$.
Therefore, one can choose the constant $c_R>0$ so that $\varphi_R(0)=1$.
Then, by the H\"older regularity in~\eqref{H3.1} and the Harnack inequality in~(H2),
we have that, for a sequence~$(R_k)_{k\in\mathbb{N}}\to+\infty$,
the functions $\varphi_{R_k}$ converge to a continuous function $\varphi>0$ in $\R^n$ and satisfying~\eqref{linear}.
\end{proof}

\medskip

\end{document}